\theoremstyle{plain}
\newtheorem{theorem}{Theorem}
\newtheorem{lemma}[theorem]{Lemma}
\newtheorem{proposition}[theorem]{Proposition}
\newtheorem{assumption}[theorem]{Assumption}
\theoremstyle{definition}
\theoremstyle{remark}
\DeclareMathOperator{\tr}{tr}
\DeclareMathOperator*{\argmin}{arg\,min}
\DeclareMathOperator*{\argmax}{arg\,max}
\DeclareMathOperator{\E}{\mathbb{E}}
\newcommand{\R}{\mathbb{R}}
\begin{document}

\title{Random Access Communication for Wireless Control Systems with Energy Harvesting Sensors}
\author{Miguel Calvo-Fullana, Carles Ant\'on-Haro, Javier Matamoros, and Alejandro Ribeiro
\thanks{
This work is supported by ARL DCIST CRA W911NF-17-2-0181 and the Intel Science and Technology Center for Wireless Autonomous Systems.
}
\thanks{
M. Calvo-Fullana, and A. Ribeiro are with the Department of Electrical and Systems Engineering, University of Pennsylvania, Philadelphia, PA 19104, USA (e-mail: \mbox{cfullana}@seas.upenn.edu; \mbox{aribeiro}@seas.upenn.edu).
}
\thanks{
C. Ant\'on-Haro, and J. Matamoros are with the Centre Tecnol\`ogic de Telecomunicacions de Catalunya (CTTC/CERCA), 08860 Castelldefels, Barcelona, Spain (e-mail: \mbox{carles.anton}@cttc.cat; \mbox{javier.matamoros}@cttc.cat).
}
\thanks{
This work has been presented in part at the 2017 American Control Conference (ACC)\cite{calvo2017random}.
}
}

\maketitle

\begin{abstract}
In this paper, we study wireless networked control systems in which the sensing devices are powered by energy harvesting. We consider a scenario with multiple plants, where the sensors communicate their measurements to their respective controllers over a shared wireless channel. Due to the shared nature of the medium, sensors transmitting simultaneously can lead to packet collisions. In order to deal with this, we propose the use of random access communication policies and, to this end, we translate the control performance requirements to successful packet reception probabilities. The optimal scheduling decision is to transmit with a certain probability, which is adaptive to plant, channel and battery conditions. Moreover, we provide a stochastic dual method to compute the optimal scheduling solution, which is decoupled across sensors, with only some of the dual variables needed to be shared between nodes. Furthermore, we also consider asynchronicity in the values of the variables across sensor nodes and provide theoretical guarantees on the stability of the control systems under the proposed random access mechanism. Finally, we provide extensive numerical results that corroborate our claims.
\end{abstract}

\begin{IEEEkeywords}
Energy harvesting, networked control systems, random access communication.
\end{IEEEkeywords}

\IEEEpeerreviewmaketitle

\section{Introduction}
\label{sec:Introduction}

The rapid pace of development of technologies such as robotic automation, smart homes, autonomous transportation,  and the internet of things is causing a dramatic increase in the average number of sensors in modern control systems. Usually, the previously mentioned technologies rely on networked control systems, and tend to incorporate wireless sensing devices to perform the monitoring of physical processes. These sensors might be deployed in large quantities and over large areas, making the replacement of their batteries a difficult and costly task. This has led to an increasing interest in alternative ways of powering wireless devices. An important technology that has recently emerged as capable of alleviating the limitations imposed by traditional battery operation is Energy Harvesting (EH). The use of energy harvesting technologies allows the devices to obtain energy from their environment (with common sources being solar, wind or kinetic energy \cite{vullers2010energy}). In turn, this removes some of the limitations imposed by traditional battery operation and grants an increase to the expected lifetime of the devices.

The study of communication systems powered by energy harvesting has recently received considerable attention. Current results available in the literature range from throughput maximization \cite{yang2012optimal,tutuncuoglu2012optimum,ozel2011transmission,ho2012optimal}, source-channel coding \cite{calvo2017reconstruction,castiglione2014energy,orhan2014source,castiglione2012energy}, estimation \cite{yang2013wireless,knorn2015distortion,calvo2016sensor}, and others (see \cite{ulukus2015energy} for a comprehensive overview). However, in general, limited attention has been given to the use of energy harvesting technologies in control applications. Most of the works currently available deal with the estimation of dynamical systems with sensors powered by energy harvesting \cite{nayyar2013optimal,nourian2014optimal,li2017power,huang2017event}. Nonetheless, the more explicit study of closed-loop system stability under energy harvesting constraints has been less studied, and only for single plant scenarios \cite{watkins2017battery,watkins2017stability}.

In this paper, we consider the multi-plant problem of scheduling communication between sensor nodes and their respective controllers. For control systems with classically powered sensor nodes (i.e., not energy harvesting), the scheduling problem in wireless networked control systems has been previously studied in several forms. The most common approach to this problem is the design of centralized scheduling policies. In such setup, in order to avoid packet collisions between the transmissions of the nodes, there exists an overseeing entity specifying which sensor is allowed to transmit at a given time slot. These type of policies might be static\cite{zhang2001stability,hristu2001feedback} or of a more dynamic nature, where centralized decisions can be taken based on plant state information \cite{walsh2002stability} or others. Decentralized policies have received less attention, with the authors in \cite{gatsis2016random} proposing a random access mechanism that adapts to channel conditions.

In our case, we study the scenario in which the sensors are powered by energy harvesting, and we focus on the design of decentralized scheduling policies. We consider the coexistence of multiple plants, with sensor nodes transmitting their measurements to their controllers over a shared wireless medium. Due to this, multiple sensors accessing the medium at the same time can cause collisions, leading to the unsuccessful reception of the sensor measurements by the controller. To mitigate this, we propose to use a random access communication scheme. First, we abstract the required control performance into a required successful packet reception probability. Under this abstraction, a Lyapunov function of each control loop is required to decrease at a given average rate. Then, we pose the random access mechanism as an stochastic optimization problem where the required successful reception probabilities act as constraints of the optimization problem. Then, the energy harvesting constraints are introduced into the problem in an average manner and and we modify the formulation to allow us to ensure time slot to time slot causality in the stochastic framework. To solve the optimization problem, we resort to a primal-dual stochastic subgradient method \cite{ribeiro2010ergodic}. At a given time slot, the resulting scheduling decision is to transmit with a certain probability, which is adaptive to the plant, channel and battery conditions. The resulting policy requires minimal coordination, with only some of the dual variables being shared between the sensors. Furthermore, we consider the possibility of asynchronism between the sensor nodes (i.e., nodes with outdated dual information) and provide theoretical guarantees that ensure the stability of all control loops under these conditions when using our proposed scheme. Finally, we validate our policy by means of simulations, which illustrate its ability to adapt to environmental conditions and satisfy the stability of all control loops. 

The rest of the paper is organized as follows. In Section \ref{sec:SystemModel} we introduce the system model and provide details on its control, communication and control performance aspects. Section \ref{sec:RandAccComm} develops the proposed random access communication scheme and we discuss how to adapt it to deal with energy harvesting. In Section \ref{sec:RandAccAlg} we introduce the algorithm used to obtain the random access communication policy. The stability of the system under the proposed policy is studied in Section \ref{sec:Stability}. After this, we devote Section \ref{sec:NumRes} to simulations assessing the performance of the proposed random access mechanism. Finally, we provide some concluding remarks in Section \ref{sec:Conclusions}.

\section{System Model}
\label{sec:SystemModel}

\begin{figure}[t]
    \centering
    \includegraphics[width=\columnwidth]{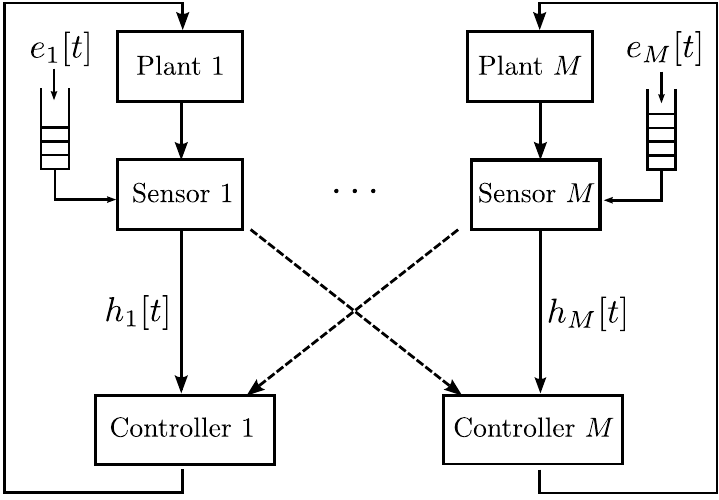} 
    \caption{System model.} 
    \label{fig:systemModel}
\end{figure}

Consider the system model shown in Figure \ref{fig:systemModel}. This scenario consists of $M$ different plants, which have their system state measured by sensor nodes powered by energy harvesting. The energy harvesting process imposes causality constraints on the transmission capabilities of the sensors, as sensors cannot transmit if they have not harvested sufficient energy. The measurements collected by the sensor nodes have then to be wirelessly transmitted to their respective controllers in order to ensure plant stability. However, the wireless medium over which the sensors transmit is shared. This implies that multiple sensors transmitting simultaneously can led to packet collisions, with the consequential lack of packet delivery. It is our objective to design transmission policies that adapt to the wireless medium and the energy harvesting process of the sensors, and are capable of stabilizing all control loops.

\subsection{Control Model}

We consider a group of $M$ plants and use $x_i[t]\in \R^{n_i}$ to denote the state of the $i$-th plant at time $t$. Plant dynamics are dictated by a linear time-invariant system in which plant control is contingent on the successful reception of information from the sensors. Define then the indicator variable $\gamma_i[t] \in \{0,1\}$ to signify with the value $\gamma_i[t]=1$ that the transmission of the $i$-th sensor at the $t$-th time slot has been successfully received by the $i$-th controller. If information is successfully received, we have $\gamma_i[t]=1$, in which case the controller closes the loop and the state evolves according to the closed loop dynamics described by the matrix $A_{c,i} \in \R^{n_i \times n_i}$. If, on the contrary, $\gamma_i[t]=0$, the state evolves in open loop as described by the matrix $A_{o,i} \in \R^{n_i \times n_i}$. We then have that the state $x_i[t]$ evolves according to the switched state linear dynamics

\begin{align}
x_i[t+1]=
\begin{cases}
    A_{c,i}x_i[t]+w_i[t],            &   \text{if } \gamma_i[t]=1, \\
    A_{o,i}x_i[t]+w_i[t],            &   \text{if } \gamma_i[t]=0,
\end{cases}
\label{eq:stateEvol}
\end{align}

where $w_i[t]$ correspond to independent and identically distributed (i.i.d.) Gaussian noise with covariance $C_i$. The design of the controllers is not the focus of this paper. The matrices are assumed given and are such that the closed loop matrix $A_{c,i}$ produces stable dynamics. The open loop matrix $A_{o,i}$ may produce stable or unstable dynamics but the problem is of most interest when the open loop dynamics are unstable.

\subsection{Communication Model}

In the control model we have defined the variables $\gamma_i[t]$ to signify the successful reception of the sensor measurements. This is a random variable whose distribution is dependent on the chosen communication policy. We consider a time-slotted communication model. At every time slot $t$, the $i$-th sensor node decides to transmit with probability $z_i[t] \in [0,1]$, where we denote $z_i[t]$ as the scheduling variable. Then, if multiple sensors transmit during the same time slot, we consider that a collision occurs with probability $q_c \in [0,1]$. If a collision occurs, then none of the colliding packets are received. Therefore, the probability of the $i$-th sensor node transmitting at time slot $t$ and not colliding with any other transmission is given by $z_i[t]\prod_{j\neq i}\left(1-q_c z_j[t]\right)$. Apart from collisions, packet loss can also occur due to incorrect decoding. The probability of successful decoding is dependent on the channel conditions at the $i$-th link during time slot $t$, which we denote by $h_i[t]$. Channel states are considered independent across the $M$ systems. Further, we consider a block fading model \cite{goldsmith2005wireless}, whereby the channel states $h_i[t]$ are i.i.d. over time slots and constant during a time slot. The probability of successfully decoding a packet given the channel state is denoted by $q(h_i[t])$, which is a continuous and strictly increasing function $q : \R^+ \to [0,1]$ (We show in Fig. \ref{fig:decodingFun} a typical decoding function). Also, for notational compactness, we also define $q_i[t] \triangleq q(h_i[t])$. Then, the probability of successful reception $\gamma_i[t]$ is given by
\begin{align}
\Pr\left(\gamma_i[t]=1\right)=q_i[t] z_i[t]\prod\limits_{j\neq i}\left(1-q_c z_j[t]\right).
\label{eq:probRX}
\end{align}
This expression simply corresponds to the successful decoding probability multiplied by the probability of transmitting without colliding. Also, we assume that sensor nodes have knowledge of their channel state before transmitting (In practice, this is usually achieved with pilot signals \cite{goldsmith2005wireless}).

\begin{figure}[t]
    \centering
    \includegraphics[width=\columnwidth]{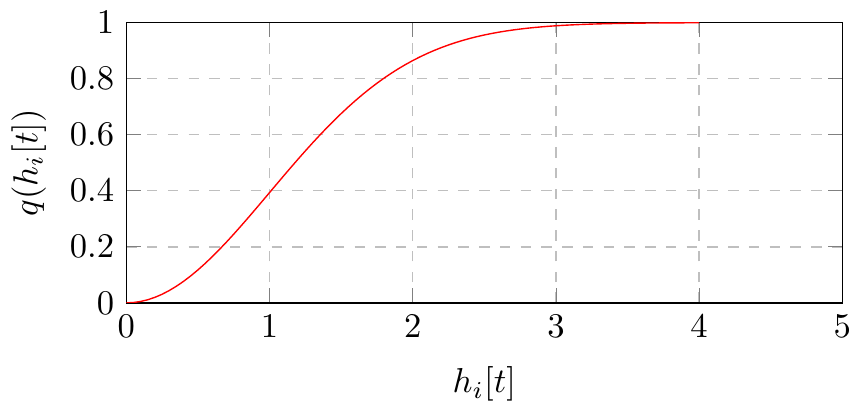} 
    \caption{Probability of decoding as a function of the channel state.}
    \label{fig:decodingFun}
\end{figure}

\subsection{Control Performance}

The control loop of each plant is closed with a probability given by equation \eqref{eq:probRX}. Since it is our objective to design communication policies that satisfy a desired control performance, we aim to establish a relationship between the control performance and the probability of successful reception. We can do so by the following proposition.
\begin{proposition}[Control performance abstraction \cite{gatsis2016random}]
\label{prop:ControlAbstraction}
Consider the switched system described by \eqref{eq:stateEvol} with $\gamma_i[t]$ given by a sequence of i.i.d. Bernoulli random variables, and the quadratic Lyapunov function $V_i(x_i)=x_i^TP_ix_i$, with $P_i \in \R^{n_i \times n_i}$ positive definite. Then the function $V_i(x_i)$ decreases at an average rate $\rho_i < 1$, denoted by 
\begin{align}
\E [V_i (x_i[t+1])|x_i[t]] \leq \rho_i V(x_i[t])+\tr(P_i C_i)
\label{eq:ControlPerfDecrease}
\end{align}
if and only if $\Pr(\gamma_i[t]=1)\geq p_i$, where $p_i$ is given by
\begin{align}
p_i=\min\limits_{\theta\geq 0} \left\{ \theta A_{c,i}^T P_i A_{c,i} +(1-\theta)A_{o,i}^T P_i A_{o,i} \leq \rho_i P_i \right\}
\label{eq:ControlPerfSDP}
\end{align}
\end{proposition}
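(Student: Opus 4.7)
The plan is to write $\E[V_i(x_i[t+1])\mid x_i[t]]$ explicitly using the switched dynamics \eqref{eq:stateEvol}, the Bernoulli law of $\gamma_i[t]$, and the fact that $w_i[t]$ is zero-mean with covariance $C_i$ and independent of $x_i[t]$. Denoting $p \triangleq \Pr(\gamma_i[t]=1)$, a direct expansion of the quadratic form and use of the identity $\E[w_i[t]^T P_i w_i[t]] = \tr(P_i C_i)$ will give
\begin{equation*}
\E[V_i(x_i[t+1])\mid x_i[t]] = x_i[t]^T M(p) x_i[t] + \tr(P_i C_i),
\end{equation*}
where $M(\theta) \triangleq \theta A_{c,i}^T P_i A_{c,i} + (1-\theta) A_{o,i}^T P_i A_{o,i}$.

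Next I would observe that the decrease condition \eqref{eq:ControlPerfDecrease} is equivalent to the quadratic inequality $x_i[t]^T (M(p) - \rho_i P_i) x_i[t] \leq 0$ holding for every realization of $x_i[t]$, and since this must hold for all $x_i[t]\in\R^{n_i}$ it is equivalent to the linear matrix inequality $M(p) \leq \rho_i P_i$ (in the positive semidefinite order). This already establishes the characterization in terms of a feasibility problem in the scalar $p$.

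It remains to connect feasibility with the minimization \eqref{eq:ControlPerfSDP}. The function $M(\theta)$ is affine in $\theta$ with slope $A_{c,i}^T P_i A_{c,i} - A_{o,i}^T P_i A_{o,i}$; under the natural assumption that the closed-loop Lyapunov decrease dominates the open-loop one (so this slope is negative semidefinite, which is implicit in the fact that \eqref{eq:ControlPerfSDP} is well-posed with a nontrivial minimizer), the LMI $M(\theta) \leq \rho_i P_i$ defines a closed upper set in $\theta$. Hence the set of feasible $\theta$ is exactly $[p_i,1]$ with $p_i$ as defined in \eqref{eq:ControlPerfSDP}, and $\Pr(\gamma_i[t]=1) \geq p_i$ is equivalent to $M(\Pr(\gamma_i[t]=1)) \leq \rho_i P_i$, completing both directions of the equivalence.

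The main obstacle is the monotonicity argument in the last paragraph: one must justify carefully that feasibility of the LMI in $\theta$ is upward-closed. This follows because if $M(\theta_0) \leq \rho_i P_i$ then for $\theta \geq \theta_0$ one has $M(\theta) = M(\theta_0) + (\theta - \theta_0)(A_{c,i}^T P_i A_{c,i} - A_{o,i}^T P_i A_{o,i})$, and under the implicit assumption (standard in this setting, since $P_i$ is a Lyapunov matrix tailored to the closed loop) that $A_{c,i}^T P_i A_{c,i} \preceq A_{o,i}^T P_i A_{o,i}$, the added term is negative semidefinite and the inequality is preserved. The rest is routine algebra.
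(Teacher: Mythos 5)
Your proof is correct and follows the same overall route as the paper: expand $\E[V_i(x_i[t+1])\mid x_i[t]]$ using the Bernoulli switching and $\E[w_i^T P_i w_i]=\tr(P_i C_i)$, observe that the resulting quadratic inequality must hold for all $x_i[t]$ and is therefore equivalent to the LMI $M(p)\leq \rho_i P_i$ with $M(\theta)=\theta A_{c,i}^TP_iA_{c,i}+(1-\theta)A_{o,i}^TP_iA_{o,i}$. The only divergence is the final step, where you must show that feasibility of the LMI is upward-closed in $\theta$. You do this via monotonicity, assuming $A_{c,i}^TP_iA_{c,i}\preceq A_{o,i}^TP_iA_{o,i}$; the paper instead notes that $M(\theta)$ is affine in $\theta$, so the feasible set is convex, hence an interval with left endpoint $p_i$. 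The convexity argument is lighter: it needs no ordering between the closed- and open-loop quadratic forms, only that $\theta=1$ (equivalently $A_{c,i}^TP_iA_{c,i}\leq\rho_iP_i$) is feasible so that the interval is $[p_i,1]$ — an assumption both arguments share implicitly, and which is the natural consequence of $A_{c,i}$ being stabilizing with $P_i$ its Lyapunov matrix. Be aware that your justification of the ordering assumption is not airtight: well-posedness of the minimization \eqref{eq:ControlPerfSDP} (the minimum being attained) does not by itself imply $A_{c,i}^TP_iA_{c,i}\preceq A_{o,i}^TP_iA_{o,i}$, since a bounded feasible interval $[a,b]$ with $b<1$ also has an attained minimum; so if you keep the monotonicity route you should state that ordering as an explicit hypothesis rather than derive it from well-posedness.
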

\begin{proof}
By particularizing the function $V_i(x_i)=x_i^TP_ix_i$ with the system dynamics \eqref{eq:stateEvol}, we can write the equation 
\begin{align}
\E [V_i (x_i[t+1])|x_i[t]] &=
 x_i[t] A_{c,i}^T P_i A_{c,i} x_i[t] \Pr(\gamma_i[t]=1) \nonumber\\
&+ x_i[t] A_{o,i}^T P_i A_{o,i} x_i[t] \Pr(\gamma_i[t]=0) \nonumber\\
&+ \tr(P_i C_i).
\end{align}
Then, by substituting this expression in the left hand side of the average decrease inequality \eqref{eq:ControlPerfDecrease} we have the following inequality
\begin{align}
x_i[t] &A_{c,i}^T P_i A_{c,i} x_i[t] \Pr(\gamma_i[t]=1) \nonumber\\
+ & x_i[t] A_{o,i}^T P_i A_{o,i} x_i[t] \Pr(\gamma_i[t]=0) \leq \rho_i x_i[t] P_i x_i[t].
\end{align}
Since this condition needs to hold for all $x_i[t]$, we can equivalently rewrite this condition as the following linear matrix inequality
\begin{align}
A_{c,i}^T P_i A_{c,i} &\Pr(\gamma_i[t]=1) \nonumber\\
&+ A_{o,i}^T P_i A_{o,i} (1-\Pr(\gamma_i[t]=1)) \leq \rho_i P_i,
\end{align}
where we have also used the fact that $\Pr(\gamma_i[t]=0)=(1-\Pr(\gamma_i[t]=1))$. Then, the $\Pr(\gamma_i[t]=1)$ values satisfying this inequality define a convex set of which there is a minimum value $p_i$ such that the condition is equivalent to $\Pr(\gamma_i[t]=1)\geq p_i$.
\end{proof}

This proposition allows us to establish a connection between the control performance and the packet transmisisons. By solving the semidefinite program \eqref{eq:ControlPerfSDP}, we obtain the successful reception probabilities $p_i$ that allow us to satisfy the required control performance. Then, we simply need to design communication policies that satisfy $\Pr(\gamma_i[t]=1)\geq p_i$ for all systems.

\section{Random Access Communication}
\label{sec:RandAccComm}

We aim to design communication policies that satisfy the successful packet reception probabilities given by Proposition \ref{prop:ControlAbstraction}. Under an assumption of ergodic processes, the successful packet reception probabilities are given by the long term behavior of expression \eqref{eq:probRX}. Hence, in order to stabilize the control system to the required control performance, the scheduling variables $z_i[t]$ need to satisfy the following long term constraint
\begin{align}
p_i \leq \lim\limits_{t \to \infty} \frac{1}{t}\sum\limits_{l=1}^{t} q_i[l] z_i[l]\prod\limits_{j\neq i}\left(1-q_c z_j[l]\right)
\label{eq:ergProbTX}.
\end{align}
Since we are working under the assumption of ergodicity, we can write the previous limit as the expected value over channel realizations. That is,
\begin{align}
p_i \leq \E \biggl[ q_i z_i \prod\limits_{j\neq i}\left(1-q_c z_j\right) \biggr],
\end{align}
and, since scheduling decision are independent over nodes, we can further rewrite the previous expression as 
\begin{align}
p_i \leq \E \left[ q_i z_i \right] \prod\limits_{j\neq i}\left(1- \E \left[ q_c z_j \right] \right).
\end{align}
Aside from the stabilization of all control loops, we also want to minimize the number of times that a sensor node accesses the medium. We do this by the introduction of the objective function $\sum_{i=1}^{M} \E z_i^2 $. Then, we formulate the following optimization problem
\begin{subequations}
\begin{align}
   \underset{z_i \in \mathcal{Z}}{\text{minimize}}  \quad   & \sum_{i=1}^{M} \E z_i^2  \\
   \text{subject to}
		\quad   & p_i \leq \E \left[ q_i z_i \right] \prod\limits_{j\neq i}\left(1- \E \left[ q_c z_j \right]\right),  i=1,\ldots,M \label{eq:OptJointNoEHConst}
\end{align}
\label{eq:OptJointNoEHProblem}
\end{subequations}
where $\mathcal{Z}:= \{z_i : \R^+ \to [0,1]\}$ is the set of functions $\R^+ \to [0,1]$ taking values on $[0,1]$. Notice that, while scheduling decisions are statistically independent across sensors, solving the previous optimization problem requires it being done in a centralized manner (as constraint \eqref{eq:OptJointNoEHConst} is coupled across sensors). Nonetheless, we can separate the problem in a per sensor manner by taking the logarithm of constraint \eqref{eq:OptJointNoEHConst} as follows
\begin{subequations}
\begin{align}
   \underset{
   \begin{subarray}{l}
  		z_i \in \mathcal{Z}, \\
  		s_{ij} \in [0,1]
  	\end{subarray}   
   }
   {\text{minimize}}  \quad   & \sum_{i=1}^{M} \E z_i^2 \\
   \text{subject to}
		\quad   & \log(p_i) \leq \log(s_{ii}) + \sum\limits_{j\neq i}\log\left(1-s_{ij}\right), \nonumber\\
					&	\quad \quad \quad \quad \quad \quad i=1,\ldots,M\\
		\quad   & s_{ii} \leq \E q_i z_i,  \quad i=1,\ldots,M\\
		\quad   & s_{ij} \geq \E q_c z_j, \quad i=1,\ldots,M, j \neq i
\end{align}
\label{eq:OptSeparateNoEHProblem}
\end{subequations}
where we have introduced the auxiliary variables $s_{ii}$ and $s_{ij}$ and converted the logarithm of the product into a sum of logarithms. Solving the optimization problem \eqref{eq:OptSeparateNoEHProblem} is equivalent to solving \eqref{eq:OptJointNoEHProblem}. Under the assumption that this problem is strictly feasible, that is, that there exist schedules $z_i$ capable of satisfying $p_i < \E \bigl[ q_i z_i \prod_{j\neq i}\left(1-q_c z_j\right) \bigr]$, the goal is then to design an algorithm such that the instantaneous scheduling decisions $z_i[t]$ satisfy $\E \bigl[ z_i[t] \bigr]=z_i$.

\subsection{Random Access Communication with Energy Harvesting}

We have proposed a random access optimization problem that allows us to stabilize all control loops. However, the formulation previously introduced does not account for either the energy consumption nor the energy harvesting process. We consider that the $i$-th sensor at time slot $t$ acquires $e_i[t]$ units of energy and stores it in a battery of finite capacity $b_i^{\max}$. Further, we assume the energy harvesting process to be stationary with mean $\E \bigl[ e_i[t] \bigr]$. We consider that the sensor nodes consume one unit of energy per channel access, hence the scheduling variable also represents the power consumption of a medium access.
 Then, in order to ensure that the sensor nodes only use the energy available in their batteries, we have the following energy causality constraint
\begin{align}
z_i[t] \leq b_i[t].
\label{eq:energyCausality}
\end{align}
where $b_i[t]$ is the battery state of node $i$ at time $t$. Further, the battery of the nodes evolves according to the following dynamics
\begin{align}
b_i[t+1]=\biggl[b_i[t]-z_i[t]+e_i[t]\biggr]_{0}^{b_i^{\max}},
\label{eq:batteryEvol}
\end{align}
where $[ \cdot ]_{0}^{b_i^{\max}}$ denotes the projection to the interval $[0,b_i^{\max}]$. However, these constraints are coupled across time slots and cannot be directly introduced into the stochastic optimization problem \eqref{eq:OptSeparateNoEHProblem}. In order to circumvent this, we consider the long-term behavior of the energy causality constraints \eqref{eq:energyCausality}, which, by recursively substituting the battery dynamics \eqref{eq:batteryEvol} in \eqref{eq:energyCausality} can be written as follows
\begin{align}
\lim\limits_{t \to \infty} \frac{1}{t} \sum\limits_{l=1}^{t} z_i[l] \leq \lim\limits_{t \to \infty} \frac{1}{t} \sum\limits_{l=1}^{t} e_i[l].
\label{eq:ergEnergyCausality}
\end{align}
That is, in the long term, the battery state is dominated by the harvested energy. Then, due to the ergodicity of the scheduling variables $z_i[t]$ and the energy harvesting process $e_i[t]$, the previous expression \eqref{eq:ergEnergyCausality} can be simply written as the expectation with respect to the channel states $h_i[t]$ and the energy harvesting process $e_i[t]$, as follows
\begin{align}
 \E \bigl[ z_i \bigr] \leq \E \bigl[ e_i \bigr].
 \label{eq:ergEnergyCausalityExpectation}
\end{align}
This constraint simply implies that, on average, the energy spent for transmitting has to be lower than the harvested energy. Then, by introducing constraint \eqref{eq:ergEnergyCausalityExpectation} into the optimization problem \eqref{eq:OptSeparateNoEHProblem} we have the following problem
\begin{subequations}
\begin{align}
   \underset{
   \begin{subarray}{l}
  		z_i \in \mathcal{Z}, \\
  		s_{ij} \in [0,1]
  	\end{subarray}   
   }
   {\text{minimize}}  \quad   & \sum_{i=1}^{M} \E z_i^2 \\
   \text{subject to}
		\quad   & \log(p_i) \leq \log(s_{ii}) + \sum\limits_{j\neq i}\log\left(1-s_{ij}\right), \nonumber\\
					&	\quad \quad \quad \quad \quad \quad i=1,\ldots,M \label{eq:optProbNoAuxC0}	 \\
		\quad   & s_{ii} \leq \E q_i z_i,  \quad i=1,\ldots,M \label{eq:optProbNoAuxC1}	 \\
		\quad   & s_{ij} \geq \E q_c z_j, \quad i=1,\ldots,M, j \neq i \label{eq:optProbNoAuxC2}	\\
		\quad   & \E z_i \leq \E e_i 	 \quad i=1,\ldots,M	 \label{eq:optProbNoAuxC3}	
\end{align}
\label{eq:optProbNoAux}
\end{subequations}
However, substituting the time slot to time slot constraints \eqref{eq:energyCausality} by the average ones \eqref{eq:ergEnergyCausalityExpectation} does not ensure that they are satisfied at each time slot. This means that solutions to the optimization problem \eqref{eq:optProbNoAux} do not necessarily satisfy the energy causality constraints $z_i[t] \leq b_i[t]$. To overcome this problem, and ensure causality, we introduce the following modified problem formulation
\begin{subequations}
\begin{align}
   \underset{
   \begin{subarray}{l}
  		z_i \in \mathcal{Z}, \\
  		s_{ij} \in [0,1], \\
  		y_{ij} \in [0,\bar{y}_{ij}]
  	\end{subarray}   
   }
   {\text{minimize}}  \quad   & \sum_{i=1}^{M} \E z_i^2  + \sum_{i=1}^{M} \sum_{j=1}^{M} \E \bar{\nu}_{ij} y_{ij} \\
   \text{subject to}
		\quad   & \log(p_i) \leq \log(s_{ii}) + \sum\limits_{j\neq i}\log\left(1-s_{ij}\right), \nonumber\\
					&	\quad \quad \quad \quad \quad \quad i=1,\ldots,M \label{eq:optProbAuxC0}\\
		\quad   & s_{ii} \leq \E q_i z_i + y_{ii},  \quad i=1,\ldots,M \label{eq:optProbAuxC1}\\
		\quad   & s_{ij} \geq \E q_c z_j - y_{ij}, \quad i=1,\ldots,M, j \neq i \label{eq:optProbAuxC2}\\	
		\quad   & \E z_i \leq \E e_i 	 \quad i=1,\ldots,M \label{eq:optProbAuxC3}
\end{align}
\label{eq:optProbAux}
\end{subequations}
This optimization problem has been modified by the introduction of the auxiliary variables $y_{ij}$ in constraints \eqref{eq:optProbAuxC1} and \eqref{eq:optProbAuxC2}, as well as in the objective function. The auxiliary variable $y_{ij}$ is forced to take values in the interval $y_{ij} \in [0,\bar{y}_{ij}]$, where $\bar{y}_{ij}$ is a system-dependent constant. The term $\sum_{i=1}^{M} \sum_{j=1}^{M} \E \bar{\nu}_{ij} y_{ij}$ in the objective function has the constant $\bar{\nu}_{ij}$, where the $\bar{\nu}_{ij}$ value is an upper bound on the Lagrange multipliers of constraints \eqref{eq:optProbAuxC1} and \eqref{eq:optProbAuxC2}. This modified problem formulation allows us to ensure that even though the energy constraint \eqref{eq:optProbAuxC3} is on average form, the energy causality constraints are satisfied in a time slot to time slot basis, as we will show in the upcoming sections.

\section{Random Access Algorithm}
\label{sec:RandAccAlg}

In this section, we aim to solve the optimization problem \eqref{eq:optProbAux}. For notational compactness, let us define the vector $z=\{z_i,s_{ij},y_{ij}\}$ collecting all the primal variables and the vector $\lambda=\{\phi_i,\nu_{ij},\beta_i\}$ collecting the dual variables. Further, we collect the implicit primal variable constraints in the set $\mathcal{X} \triangleq \{ z_i \in \mathcal{Z}, s_{ij} \in [0,1], y_{ij} \in [0,\bar{y}_{ij}] \}$. Then, the Lagrangian of problem \eqref{eq:optProbAux} can be written as
\begin{align}
\mathcal{L}(z,\lambda)&= \sum_{i=1}^{M} \E z_i^2 + \sum_{i=1}^{M} \sum_{j=1}^{M} \E \bar{\nu}_{ij} y_{ij} \nonumber\\
&+ \sum\limits_{i=1}^{M} \phi_i\left(\log\left(p_i\right) - \log\left(s_{ii}\right) - \sum\limits_{j\neq i}\log\left(1-s_{ij}\right) \right) \nonumber\\
&+ \sum\limits_{i=1}^{M} \nu_{ii}\left(s_{ii} - \E q_i z_i - y_{ii}\right)  \nonumber\\
&+ \sum\limits_{i=1}^{M} \sum_{j \neq i} \nu_{ij}\left(\E q_c z_j - y_{ij} - s_{ij}\right) \nonumber\\
&+ \sum\limits_{i=1}^{M}\beta_i\left(\E z_i - \E e_i\right).
 \label{eq:optProbAuxLagrangian}
\end{align}
The Lagrange dual function of this problem is given by
\begin{align}
g(\lambda)=\min_{z \in \mathcal{X}} \mathcal{L}(z,\lambda).
\end{align}
Note that, while the primal problem is infinite dimensional, the dual problem has a finite number of variables (the dual variables). Furthermore, for this problem, the duality gap can be shown to be zero \cite{ribeiro2012optimal}. Hence, we resort to a dual subgradient method to solve the optimization problem. However, the sensor nodes have no knowledge of the probability distribution over which the expectation is taken. In order to overcome this, we substitute the random variables by their instantaneous values, which are known by the sensors. Finally, by reordering the Lagrangian \eqref{eq:optProbAuxLagrangian}, the scheduling variables $z_i[t]$ are given by the following minimization
\begin{align}
z_i[t]:=\argmin\limits_{z_i\in [0,1]} z_i \left( z_i-\nu_{ii}[t]q_i[t]+q_c \sum_{j \neq i} \nu_{ji}[t]+\beta_i[t] \right),
\end{align}
which is separated across sensors and leads to the following closed form solution
\begin{align}
z_i[t]:= \frac{1}{2}\biggl[ \nu_{ii}[t]q_i[t]-q_c \sum_{j \neq i} \nu_{ji}[t]-\beta_i[t] \biggr]_0^1.
 \label{eq:primalZClosed}
\end{align}
The resulting optimal scheduling policy is to transmit at time slot $t$ with the probability given by \eqref{eq:primalZClosed}. This is a policy that dynamically adapts to the time-varying conditions of the system. Namely, the dual variables $\nu_{ii}[t]$ and $\nu_{ji}[t]$ depend on the stability of all the plants, the $q_i[t]$ variable is dependent on the channel state $h_i[t]$, and the dual variables $\beta_i[t]$ depend on the energy harvesting process $e_i[t]$. The rest of the primal variables $s_{ii}[t]$ and $s_{ij}[t]$ can be found by the minimization
\begin{align}
s_{ii}[t]:=\argmin\limits_{s_{ii}\in [0,1]} -\phi_i[t] \log(s_{ii})+ \nu_{ii}[t] s_{ii},
\end{align}
\begin{align}
s_{ij}[t]:=\argmin\limits_{s_{ij}\in [0,1]} - \phi_i[t] \log(1-s_{ij}) - \nu_{ij}[t] s_{ij},
\end{align}
which, again, are separated across sensors and have the following closed forms solutions 
\begin{align}
s_{ii}[t]:=\left[\frac{\phi_i[t]}{\nu_{ii}[t]}\right]_0^1,
\quad \quad
s_{ij}[t]:=\left[1-\frac{\phi_i[t]}{ \nu_{ij}[t]}\right]_0^1.
 \label{eq:primalSClosed}
\end{align}
In a similar way, the auxiliary variables $y_{ij} [t]$ can be computed as the solution of the minimization
 \begin{align}
y_{ij} [t] := \argmin_{y_{ij} \in [0,\bar{y}_{ij}]} y_{ij} \left( \bar{\nu}_{ij} - \nu_{ij}[t]\right),
 \label{eq:auxVarYMin}
\end{align}
which is a thresholding condition. The auxiliary variable $y_{ij} [t]$ takes the value $y_{ij} [t]:=0$ if $\nu_{ij} [t] \leq \bar{\nu}_{ij}$ and $y_{ij} [t] := \bar{y}_{ij}$ if $\nu_{ij} [t] > \bar{\nu}_{ij}$. Next, since the dual function is concave, we can perform a subgradient ascent on the dual domain. The corresponding dual variable updates are given by
\begin{align}
\phi_i[t+1]:=\biggl[\phi_i[t]+\epsilon \biggl(\log &  \left(  p_i\right) -\log\left(s_{ii}[t]\right)  \nonumber\\
&- \sum\limits_{j\neq i}\log\left(1-s_{ij}[t]\right) \biggr) \biggr]^+
\end{align}
\begin{align}
\nu_{ii}[t+1]:=\left[\nu_{ii}[t]+\epsilon \biggl( s_{ii}[t]-z_i[t]q_i[t] - y_{ii}[t]  \biggr) \right]^+
 \label{eq:dualUptNuii}
\end{align}
\begin{align}
\nu_{ij}[t+1]:=\left[\nu_{ij}[t]+\epsilon \biggl( q_c z_i[t]-s_{ij}[t] - y_{ij}[t] \biggr) \right]^+
 \label{eq:dualUptNuij}
\end{align}
\begin{align}
\beta_i[t+1]:=\left[\beta_i[t]+ \epsilon \biggl( z_i[t]-e_i[t] \biggr) \right]^+
\end{align}
where, in order to have an algorithm than can be run in an online manner, we have considered a fixed step sized $\epsilon$. For notational compactness, we also write the dual update in a concatenated vector form as $\lambda[t+1]:=\bigl[\lambda[t]+\epsilon s[t] \bigr]^+$, where $s_i[t]$ corresponds to the stochastic subgradient. The steps in the resulting random access mechanism are summarized in Algorithm \ref{alg:Algorithm}.

Also, it is important to note that we can establish a parallel relationship between the dual variables $\beta_i[t]$ associated to the energy constraint  $\E z_i \leq \E e_i$  and the actual battery state $b_i[t]$. This relationship is given by the expression $\beta_i[t]=\epsilon  \bigl( b_i^{\max} - b_i[t] \bigr)$. Hence, a mirrored symmetry (scaled by the step size $\epsilon$) exists between these variables. This relationship will be crucial in ensuring the energy causality of the algorithm, as we show next.

\begin{algorithm}[t]
    \caption{Random access scheduling algorithm.}
    \label{alg:Algorithm}
    \begin{algorithmic}[1]
        \State \textbf{Initialize:} Initialize the dual variables to $\phi_i[0]:=0$, $\nu_{ij}[0]:=0$, and $\beta_i[0]:=\epsilon \bigl( b_i^{\max}-b_i[0] \bigr)$.
        \State \textbf{Step 1:} Medium access decision
        \State $z_i[t]:= \frac{1}{2}\biggl[ \nu_{ii}[t]q_i[t]-q_c \sum_{j \neq i} \nu_{ji}[t]-\beta_i[t] \biggr]_0^1$
        \State \textbf{Step 2:} Other primal variables 
        \State $s_{ii}[t]:=\left[\frac{\phi_i[t]}{\nu_{ii}[t]}\right]_0^1 $ and
         		  $s_{ij}[t]:=\left[1-\frac{\phi_i[t]}{\nu_{ij}[t]}\right]_0^1$
        \State \textbf{Step 3:} Auxiliary variable
        \State $y_{ij} [t] := \argmax\limits_{y_{ij} \in [0,\bar{y}_{ij}]} y_{ij} \left( \bar{\nu}_{ij} - \nu_{ij}[t]\right)$
        \State \textbf{Step 4:} The sensor updates the dual variables
        \State $\phi_i[t+1]:=\biggl[\phi_i[t]+\epsilon \big( \log\left(p_i\right) - \log\left(s_{ii}[t]\right)$
        		  
        		  \vspace{-2ex}
        		  \hspace{25ex}
        		  $-\sum\limits_{j\neq i}\log\left(1-s_{ij}[t]\right)\big)\biggr]^+$
        \State $\nu_{ii}[t+1]:=\bigg[\nu_{ii}[t]+\epsilon\big(s_{ii}[t]-z_i[t]q_i[t] - y_{ii}[t] \big)\bigg]^+$
        \State $\nu_{ij}[t+1]:=\bigg[\nu_{ij}[t]+\epsilon\big( q_c z_i[t]-s_{ij}[t] - y_{ij}[t] \big)\bigg]^+$
        \State $\beta_i[t+1]:=\bigg[\beta_i[t]+\epsilon\big( z_i[t]-e_i[t]\big)\bigg]^+$
        \State \textbf{Step 5:} Set $t:=t+1$ and go to Step 1.
    \end{algorithmic}
\end{algorithm}

\subsection{Energy Causality}
Now, we turn our attention to the study of the conditions required to satisfy the causality constraints, i.e., $z_i[t] \leq b_i[t]$ for all time slots. We have introduced the modified problem formulation \eqref{eq:optProbAux} to help achieve this. First, we show that this problem formulation allows us to upper bound the dual variables $\nu_{ij}[t]$ over all time slots.
\begin{proposition}
\label{prop:DualUpperBound}
Let the upper bound $\bar{y}_{ij}$ of the auxiliary variables $y_{ij}$ satisfy the inequality $\bar{y}_{ij} \geq \frac{1}{\epsilon}\left(\bar{\nu}_{ij}+2\epsilon\right)$. Then, the dual variables $\nu_{ij}[t]$ are upper bounded by $\nu_{ij}[t] \leq \bar{\nu}_{ij} + \epsilon$ for all time slots $t$.
\end{proposition}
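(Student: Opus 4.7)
The plan is to prove the bound by induction on the time slot $t$. The base case is immediate: Algorithm~\ref{alg:Algorithm} initializes $\nu_{ij}[0] := 0$, which clearly satisfies $\nu_{ij}[0] \le \bar{\nu}_{ij} + \epsilon$. For the inductive step, I assume $\nu_{ij}[t] \le \bar{\nu}_{ij} + \epsilon$ and examine the update \eqref{eq:dualUptNuij}, splitting into two cases depending on which branch of the thresholding rule \eqref{eq:auxVarYMin} governs $y_{ij}[t]$.

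In the first case, $\nu_{ij}[t] \le \bar{\nu}_{ij}$, so $y_{ij}[t]=0$. Since $q_c \in [0,1]$, $z_i[t] \in [0,1]$, and $s_{ij}[t] \ge 0$, the stochastic subgradient satisfies $q_c z_i[t] - s_{ij}[t] - y_{ij}[t] \le 1$. Plugging this into the update gives $\nu_{ij}[t+1] \le \nu_{ij}[t] + \epsilon \le \bar{\nu}_{ij} + \epsilon$, as required. In the second case, $\nu_{ij}[t] > \bar{\nu}_{ij}$, so the thresholding rule yields $y_{ij}[t] = \bar{y}_{ij}$. Using the inductive hypothesis together with the bound $q_c z_i[t] - s_{ij}[t] \le 1$, I get
\begin{align}
\nu_{ij}[t] + \epsilon\bigl(q_c z_i[t] - s_{ij}[t] - \bar{y}_{ij}\bigr) \le \bar{\nu}_{ij} + 2\epsilon - \epsilon \bar{y}_{ij},
\end{align}
and the hypothesis $\bar{y}_{ij} \ge (\bar{\nu}_{ij} + 2\epsilon)/\epsilon$ makes the right-hand side non-positive. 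Hence the projection $[\,\cdot\,]^+$ in \eqref{eq:dualUptNuij} forces $\nu_{ij}[t+1] = 0 \le \bar{\nu}_{ij} + \epsilon$, closing the induction.

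The argument is essentially mechanical once the two cases are isolated, so there is no serious technical obstacle; the only subtle point is recognizing why the particular choice of $\bar{y}_{ij}$ is tight. Concretely, after any single overshoot (i.e., whenever $\nu_{ij}[t]$ lies in the interval $(\bar{\nu}_{ij}, \bar{\nu}_{ij}+\epsilon]$), the chosen lower bound on $\bar{y}_{ij}$ is exactly what guarantees that the subgradient step drives $\nu_{ij}$ all the way down to zero in one slot, rather than merely decreasing it. This observation both explains the specific constant appearing in the hypothesis and is the feature that will later let the energy-causality argument relate $\beta_i[t]$ and the physical battery state $b_i[t]$.
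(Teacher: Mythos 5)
Your proof is correct and uses the same mechanism as the paper's: the subgradient is bounded by $1$ so the per-step increase is at most $\epsilon$, and once $\nu_{ij}[t]$ exceeds $\bar{\nu}_{ij}$ the thresholding rule \eqref{eq:auxVarYMin} sets $y_{ij}[t]=\bar{y}_{ij}$, which together with $\bar{y}_{ij}\geq\frac{1}{\epsilon}(\bar{\nu}_{ij}+2\epsilon)$ forces the projected update to zero. Packaging this as a two-case induction is a slightly more careful formalization than the paper's narrative, not a different route; the only cosmetic omission is that you spell out only the off-diagonal subgradient $q_c z_i[t]-s_{ij}[t]-y_{ij}[t]$, while the identical bound holds for the diagonal term $s_{ii}[t]-z_i[t]q_i[t]-y_{ii}[t]$ covered by the same proposition.
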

\begin{proof}
The dual variable $\nu_{ij}[t]$ is updated according to the following equations
\begin{align}
\nu_{ii}[t+1]&:=\left[\nu_{ii}[t]+\epsilon \biggl( s_{ii}[t]-z_i[t]q_i[t] - y_{ii}[t]  \biggr) \right]^+ \\
\nu_{ij}[t+1]&:=\left[\nu_{ij}[t]+\epsilon \biggl( q_c z_i[t]-s_{ij}[t] - y_{ij}[t] \biggr) \right]^+,
 \label{eq:dualUptNuij}
\end{align}
 where the subgradient terms are upper bounded by $1$, namely $s_{ii}[t]-z_i[t]q_i[t] - y_{ii}[t] \leq 1$ and $q_c z_i[t]-s_{ij}[t] - y_{ij}[t] \leq 1$ for all time slots $t$. Hence, we have that the maximum increase of these dual variables in a given time slot is $| \nu_{ij}[t+1] - \nu_{ij}[t] | \leq \epsilon$ for all $i,j$ and $t$. Overall, the maximum value that the dual variables $\nu_{ij}[t]$ can take is controlled by the $y_{ij}[t]$ term. As long as $y_{ij}[t]=0$ the dual variables can increase in value, until $\nu_{ij}[t] = \bar{\nu}_{ij} + \epsilon$ and the auxiliary variable condition in \eqref{eq:auxVarYMin} is triggered, leading to the $y_{ij}[t]$ term taking the value $y_{ij} [t] = \bar{y}_{ij}$. Then, the next update of the dual variable is given by
 \begin{align}
 \nu_{ij}[t+1] & \leq \bigl[\bar{\nu}_{ij} +\epsilon - \epsilon y_{ij}[t]\bigr]^+ \nonumber\\
					& \leq \left[\bar{\nu}_{ij} + \epsilon +\epsilon - \epsilon \left( \frac{1}{\epsilon}\left(\bar{\nu}_{ij}+2\epsilon\right) \right)\right]^+ \nonumber\\
					& = 0.
\end{align}
 Since after this event, the dual variables take the zero value, the dual variables $\nu_{ij}[t]$ are necessarily upper bounded by $\nu_{ij}[t] \leq \bar{\nu}_{ij} + \epsilon$ for all time slots $t$.
\end{proof}

This proposition states that by ensuring the correct value of the parameter $\bar{y}_{ij}$ (which we can select freely), an upper bound on $\nu_{ij}[t]$ can be established. Then, by further appropriately selecting the battery size $b_i^{\max}$ of the nodes, we can ensure that energy use is causal to the energy harvested.

\begin{proposition}[Energy Causality]
\label{prop:EnergyCausality}
Let the battery capacity of the $i$-th sensor satisfy $b_i^{\max} \geq \frac{1}{\epsilon} \bar{\nu}_{ii} + 1$ and let $\bar{y}_{ij} \geq \frac{1}{\epsilon}\left(\bar{\nu}_{ij}+2\epsilon\right)$. Then, Algorithm \ref{alg:Algorithm} satisfies the energy consumption causality constraints $z_i[t] \leq b_i[t]$ for all time slots.
\end{proposition}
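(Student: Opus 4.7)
The plan is a joint induction on $t$ that simultaneously maintains the mirror identity $\beta_i[t]=\epsilon(b_i^{\max}-b_i[t])$ (flagged in the text just before the statement) and the desired causality bound $z_i[t]\le b_i[t]$. At each time slot, the identity is what lets the dual bound of Proposition~\ref{prop:DualUpperBound} be converted into a bound phrased in terms of the battery state, and the causality bound is in turn what prevents the battery from undershooting and breaking the identity at the next slot. The base case is immediate since Algorithm~\ref{alg:Algorithm} initializes $\beta_i[0]:=\epsilon(b_i^{\max}-b_i[0])$.

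For the forward half of the inductive step (identity at $t$ implies causality at $t$), I would start from the closed-form \eqref{eq:primalZClosed}, drop the non-negative cross-term $q_c\sum_{j\ne i}\nu_{ji}[t]$ (which only decreases the argument inside the clamp), use $q_i[t]\le 1$, and apply Proposition~\ref{prop:DualUpperBound} to replace $\nu_{ii}[t]$ by $\bar{\nu}_{ii}+\epsilon$. Substituting the inductive identity for $\beta_i[t]$ yields an upper bound of $\bar{\nu}_{ii}+\epsilon-\epsilon b_i^{\max}+\epsilon b_i[t]$ on the unclamped argument. The sizing assumption $b_i^{\max}\ge\bar{\nu}_{ii}/\epsilon+1$ annihilates the constant part, leaving an upper bound proportional to $b_i[t]$, and after the $1/2$ factor and the $[\,\cdot\,]_0^1$ projection this produces $z_i[t]\le b_i[t]$.

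For the backward half (causality at $t$ propagates the identity to $t+1$), the key observation is that $z_i[t]\le b_i[t]$ together with $e_i[t]\ge 0$ ensures $b_i[t]-z_i[t]+e_i[t]\ge 0$, so the lower clamp in the battery dynamics \eqref{eq:batteryEvol} is never triggered. Two cases then remain: in the unclamped case, both $b_i[\cdot]$ and $\beta_i[\cdot]$ evolve affinely with matching slopes (up to the sign flip embedded in the identity), so the identity is inherited exactly; in the upper-clamped case $b_i[t+1]=b_i^{\max}$, one checks that the matching dual subgradient drives $\beta_i[t]+\epsilon(z_i[t]-e_i[t])$ non-positive, so the $[\,\cdot\,]^+$ in the $\beta_i$-update pins $\beta_i[t+1]$ to $0=\epsilon(b_i^{\max}-b_i^{\max})$.

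The main obstacle is coordinating the two projections---the one-sided $[\,\cdot\,]^+$ on the dual update and the two-sided clamp on the battery update---so that the mirror identity survives the transition. The only way this works is if one of the clamps is structurally inactive, and that is precisely what the causality bound buys: once $z_i[t]\le b_i[t]$ is established, the lower battery clamp cannot fire, and only the upper saturation case needs to be matched against the $[\,\cdot\,]^+$ on $\beta_i[t+1]$. The whole argument therefore rests on the interlocking of Proposition~\ref{prop:DualUpperBound}, the sizing condition $b_i^{\max}\ge\bar{\nu}_{ii}/\epsilon+1$, and the initial conditions chosen in Algorithm~\ref{alg:Algorithm}.
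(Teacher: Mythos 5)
Your proof follows essentially the same route as the paper's: combine the mirror identity $\beta_i[t]=\epsilon\bigl(b_i^{\max}-b_i[t]\bigr)$ with the dual bound of Proposition~\ref{prop:DualUpperBound} and the sizing of $b_i^{\max}$ to force the argument of the clamp in \eqref{eq:primalZClosed} to be nonpositive when the battery is depleted. You are in fact more careful than the paper, which simply asserts the mirror identity without proof and only verifies the causality condition at $b_i[t]=0$; your induction showing that the identity survives both the one-sided dual projection and the two-sided battery clamp is a genuine and welcome addition. One small caveat on your strengthened forward step: the chain of bounds gives $z_i[t]\le\tfrac{1}{2}\min\{1,\epsilon\, b_i[t]\}$, which implies $z_i[t]\le b_i[t]$ only when $b_i[t]\ge\tfrac{1}{2}$ or $\epsilon\le 2$, so for intermediate battery levels and large step sizes the claim needs that extra restriction (the paper sidesteps this entirely by checking only the empty-battery case).
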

\begin{proof}
In order to satisfy the energy causality constraints $z_i[t] \leq b_i[t]$, it suffices to verify that no transmission occurs when there is no energy left in the battery. This implies that the scheduling variable $z_i[t]$ has to take the zero value when the battery $b_i[t]$ is empty. By equation \eqref{eq:primalZClosed}, it suffices to satisfy $\nu_{ii}[t]q_i[t]-q_c \sum_{j \neq i} \nu_{ji}[t]-\beta_i[t] \leq 0$, when $b_i[t]=0$. Note that the battery state $b_i[t]$ and the battery multipliers $\beta_i[t]$ are related by the expression $\beta_i[t]=\epsilon  \bigl( b_i^{\max} - b_i[t] \bigr)$. Hence, the battery being empty, $b_i[t]=0$, implies the battery multipliers taking the value $\beta_i[t]=\epsilon b_i^{\max}$. Therefore, the condition to be satisfied is  $\nu_{ii}[t]q_i[t]-q_c \sum_{j \neq i} \nu_{ji}[t]- \epsilon b_i^{\max} \leq 0$. Since $q_i[t] \leq 0$ and $q_c \geq 0$, we can further rewrite this inequality as $\nu_{ii}[t]- \epsilon b_i^{\max} \leq 0$. Then, by Proposition \ref{prop:DualUpperBound}, we have the upper bound on the dual variables $\nu_{ij}[t] \leq \bar{\nu}_{ij} + \epsilon$. This allows us to further rewrite the inequality as $\bar{\nu}_{ij} + \epsilon - \epsilon b_i^{\max} \leq 0$. Then, the battery size $b_i^{\max} \geq \frac{1}{\epsilon} \bar{\nu}_{ii} + 1$, ensures this inequality, and hence, that the energy constraints $z_i[t] \leq b_i[t]$ are satisfied for all time slots.
\end{proof}

According to this proposition, by choosing a sufficiently large battery size $b_i^{\max}$ we can make the energy consumption causal to the energy harvesting process. This is due to the modified problem formulation proposed in \eqref{eq:optProbAux}. In the original problem \eqref{eq:optProbNoAux}, a dual ascent algorithm can lead to the dual variables becoming arbitrarily large. This is not the case when introducing the auxiliary variables $y_{ij}$, as shown by Proposition \ref{prop:DualUpperBound}. Then, by Proposition \ref{prop:EnergyCausality}, the bound on the dual variables allows us to establish conditions on the battery size that ensure energy causality.

\subsection{Asynchronous Operation}

In order for Algorithm \ref{alg:Algorithm} to properly function, the $i$-th sensor node requires the dual variables $\nu_{ij}[t]$ for $j \neq i$ of the other nodes. This is needed in order to compute the optimal scheduling variable $z_i[t]$ (cf. equation \eqref{eq:primalZClosed}). In order to ensure the robustness of our algorithm, we take into account the notion of asynchronicity in the data shared across the sensor nodes. This is to say that we consider the possibility of different nodes having different (out of date) values of the dual variables shared by the other nodes. Since nodes are powered by energy harvesting, this might happen when a node is unable to transmit or receive data due to lack of energy. Also, the consideration of asynchronicity includes the practical case in which the sensor nodes simply attach the value of their dual variable to the packet containing the measurement. Therefore, only sharing their dual variable when they need to transmit a measurement to their controller. To consider this, we introduce the asynchronicity model of \cite{bertsekas1989parallel} into our analysis. 

Let us define the set $T^i \subseteq \mathbb{Z}^+$ of all time slots in which the $i$-th node is capable of receiving and sending information. Then, we define a function $\pi^i[t]$, that for a given node and time slot, returns the most recent time slot at which the node was available. Namely, 
\begin{align}
\pi^i[t]:=\max \left\{\hat{t} \mid \hat{t} < t, \hat{t} \in T^i \right\}.
\end{align}
In a similar manner, we then define the function $\pi^i_j[t]:=\pi^j \bigl[ \pi^i[t] \bigr]$ to denote the most recent time slot the $i$-th node has received information sent by the $j$-th node. Then, at time slot $t$, the $i$-th node has knowledge of a possibly outdated vector of dual variables $\nu_{ij}$, given by
\begin{align}
\tilde{\nu}_i[t]=\left(\nu_{i1}[\pi^i_1[t]],\ldots,\nu_{iM}[\pi^i_M[t]]\right).
\end{align}
Further, we will denote by $\tilde{\lambda}$ the vector formed by the collection of the outdated duals together with the rest of the dual variables. Following, we can write the asynchronism into the dual variable update by defining the asynchronous stochastic subgradient $\tilde{s}_{\nu_{ij}}[t]$, corresponding to the $\nu_{ij}$ variable. We do so as follows
\begin{align}
\tilde{s}_{\nu_{ij}}[t]=
\begin{cases}
    s_{\nu_{ij}}[t],            &   \text{if } t \in T^j, \\
    0,     				&   \text{otherwise}.
\end{cases}
\end{align}
Simply meaning that, if $t \in T^j$, the ascent direction given by the subgradient $s_{\nu_{ij}}[t],$ is available. Otherwise, the dual variable is not updated. Then, we can concatenate all the subgradients of all dual variables into an asynchronous stochastic subgradient vector $\tilde{s}[t]$. Afterwards, the dual variable update is simply given by the usual expression but with the asynchronous stochastic subgradient, i.e., $\lambda[t+1]:=\left[\lambda[t]+\epsilon \tilde{s}[t] \right]^+$.

\section{Stability Analysis}
\label{sec:Stability}

In this section, we analyze the stability of the systems when operating under the proposed random access communication scheme. In order to do this, we leverage on the fact that the proposed scheme is a stochastic subgradient algorithm. Hence, we rely on duality theory arguments to show that the iterates generated by  Algorithm \ref{alg:Algorithm} satisfy the constraints of the optimization problem \eqref{eq:optProbAux} almost surely. Then, we further show that if the constants $\bar{\nu}_{ij}$ are chosen to upper bound the optimal Lagrange multipliers  $\nu^{\star}_{ij}$, then the iterates generated by Algorithm \ref{alg:Algorithm} also satisfy the constraints of the optimization problem \eqref{eq:optProbNoAux} (i.e., without the auxiliary variables). In turn, this guarantees by Proposition \ref{prop:ControlAbstraction} the stability of all control loops.

First, in order to ensure the convergence of Algorithm \ref{alg:Algorithm}, we need to assume an upper bound on the asynchronicity between the sensor nodes.
 
\begin{assumption}
\label{assump:asynchronicity}
There exists an upper bound $0 < B < \infty$ to the asynchronicity between nodes, such that for all time $t$ and nodes $i,j$ we have
\begin{align}
\max \left\{0,t-B+1\right\} \leq \pi^i_j[t] \leq t.
\end{align}	
\end{assumption}
This assumption simply implies that nodes are at most $B$ time slots out of synchronism and it is required to ensure the convergence of the variables. Now, we proceed to show the convergence of Algorithm \ref{alg:Algorithm}. We start by recalling a common property of the subgradient method.
\begin{proposition}
\label{prop:avgSubgradient}
Given dual variables $\lambda[t]$, the conditional expected value $\E \big[ s[t] | \lambda [t] \big]$ of the stochastic subgradient $s[t]$ is a subgradient of the dual function. Namely, for any $\lambda$, 
\begin{align}
\E \big[ s^T[t] | \lambda [t] \big] 	\big( \lambda [t]  - \lambda \big) \leq g(\lambda [t] ) - g(\lambda).
\end{align}
\end{proposition}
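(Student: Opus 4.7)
The plan is to recognize this as the standard ``subgradient in expectation'' property that underlies the stochastic dual method of \cite{ribeiro2010ergodic}. The starting observation is that the Lagrangian \eqref{eq:optProbAuxLagrangian} is affine in $\lambda$ and that every expectation in $\mathcal{L}(z,\lambda)$ is taken over the same underlying random state $\omega[t] := (h_1[t],\ldots,h_M[t], e_1[t],\ldots,e_M[t])$. This allows me to write $\mathcal{L}(z,\lambda)=\E\bigl[\mathcal{L}_0(z(\omega),\omega,\lambda)\bigr]$ for an instantaneous Lagrangian $\mathcal{L}_0$ that is linear in $\lambda$, with the stochastic subgradient $s[t]$ being precisely $\nabla_\lambda \mathcal{L}_0(z[t],\omega[t],\lambda[t])$ evaluated at the current realization (i.e., the vector of instantaneous constraint slacks appearing in Steps~4 of Algorithm~\ref{alg:Algorithm}).

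The proof proceeds in four steps. First, I would exchange min and expectation in the dual function: since $\mathcal{Z}=\{z_i:\R^+\to[0,1]\}$ explicitly allows $z_i$ (and analogously $s_{ij}$, $y_{ij}$) to be functions of the realized state, the pointwise minimization in $\omega$ is feasible, and hence
\begin{align*}
g(\lambda)=\min_{z\in\mathcal{X}}\E\bigl[\mathcal{L}_0(z(\omega),\omega,\lambda)\bigr]=\E\Bigl[\min_{\zeta\in\mathcal{X}_0}\mathcal{L}_0(\zeta,\omega,\lambda)\Bigr],
\end{align*}
where $\mathcal{X}_0$ is the realization-wise counterpart of $\mathcal{X}$. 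Second, I would define $z[t]$ as the per-realization minimizer given by \eqref{eq:primalZClosed}, \eqref{eq:primalSClosed}, and \eqref{eq:auxVarYMin} at the state $\omega[t]$ and the current dual $\lambda[t]$; by the interchange, $g(\lambda[t])=\E\bigl[\mathcal{L}_0(z[t],\omega[t],\lambda[t])\mid\lambda[t]\bigr]$. Third, for an arbitrary $\lambda$, I plug the suboptimal $z[t]$ into the instantaneous Lagrangian and use the minimum property to bound $g(\lambda)\leq\E\bigl[\mathcal{L}_0(z[t],\omega[t],\lambda)\mid\lambda[t]\bigr]$.

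Fourth, I subtract the two expressions and exploit the affine dependence of $\mathcal{L}_0$ on $\lambda$: the difference telescopes to $\bigl\langle \nabla_\lambda\mathcal{L}_0(z[t],\omega[t],\cdot),\,\lambda[t]-\lambda\bigr\rangle=s^T[t](\lambda[t]-\lambda)$. Taking the conditional expectation given $\lambda[t]$ yields
\begin{align*}
g(\lambda[t])-g(\lambda)\geq \E\bigl[s^T[t]\mid\lambda[t]\bigr](\lambda[t]-\lambda),
\end{align*}
which is the claim.

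The main obstacle I anticipate is the rigorous justification of the min--expectation interchange: it requires recognizing that the decision variables live in a function space over the random state and that measurable selection of the argmin is possible. In this paper the interchange is essentially built into the definition of $\mathcal{Z}$, so in practice the step reduces to citing \cite{ribeiro2012optimal} (the same reference invoked to claim zero duality gap for problem \eqref{eq:optProbAux}) rather than reproving it. The remaining algebra is routine once the linearity of $\mathcal{L}_0$ in $\lambda$ has been identified.
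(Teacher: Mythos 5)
Your proposal is correct and follows essentially the same route as the paper: evaluate $g(\lambda[t])$ at the algorithm's per-realization primal minimizers, bound $g(\lambda)$ for arbitrary $\lambda$ by plugging those same (now suboptimal) primal variables into the Lagrangian, subtract, and use linearity of the Lagrangian in $\lambda$ to identify the difference with $\E\big[s^T[t]\mid\lambda[t]\big](\lambda[t]-\lambda)$. The only difference is that you make the min--expectation interchange and the measurability of the pointwise minimizer explicit, whereas the paper takes these for granted by directly asserting that the iterates of Algorithm~\ref{alg:Algorithm} minimize the Lagrangian.
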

\begin{proof}
We intend to show that the expected value of the stochastic subgradient $s[t]$ given $\lambda[t]$ is a subgradient of the dual function $g(\lambda)$. To do this, we take the Lagrangian \eqref{eq:optProbAuxLagrangian} of optimization problem \eqref{eq:optProbAux}, given by
\begin{align}
\mathcal{L}&(z,\lambda)= \sum\displaystyle_{i=1}^{M} \E z_i^2 + \sum\displaystyle_{i=1}^{M} \sum\displaystyle_{j=1}^{M} \E \bar{\nu}_{ij} y_{ij} \nonumber\\
&+ \sum\displaystyle_{i=1}^{M} \phi_i\bigl(\log\left(p_i\right) - \log\left(s_{ii}\right) - \sum\displaystyle_{j\neq i}\log\left(1-s_{ij}\right) \bigr) \nonumber\\
&+ \sum\displaystyle_{i=1}^{M} \nu_{ii}\bigl(s_{ii} - \E q_i z_i - y_{ii}\bigr)  \nonumber\\
&+ \sum\displaystyle_{i=1}^{M} \sum\displaystyle_{j \neq i} \nu_{ij}\bigl(\E q_c z_j - y_{ij} - s_{ij}\bigr) \nonumber\\
&+ \sum\displaystyle_{i=1}^{M}\beta_i\bigl(\E z_i - \E e_i\bigr).
\end{align}
Then, take the dual function at time $t$, denoted by $g(\lambda[t])$ and remember that the dual function is given by $g(\lambda)=\min_{z \in \mathcal{X}} \mathcal{L}(z,\lambda)$. The primal variables that minimize this dual function are obtained by the primal minimization of Algorithm \ref{alg:Algorithm}, namely, $z_i[t]$, $s_{ij}[t]$ and $y_{ij}[t]$, given by equations \eqref{eq:primalZClosed}, \eqref{eq:primalSClosed}, and \eqref{eq:auxVarYMin}, respectively. Then, we write the dual function at time $t$ as
\begin{align}
&g(\lambda[t])= \sum\displaystyle_{i=1}^{M} \E z_i^2[t] + \sum\displaystyle_{i=1}^{M} \sum\displaystyle_{j=1}^{M} \E \bar{\nu}_{ij} y_{ij}[t] + \nonumber\\
&\sum\displaystyle_{i=1}^{M} \phi_i[t]\bigl(\log\left(p_i\right) - \log\left(s_{ii}[t]\right) - \sum\displaystyle_{j\neq i}\log\left(1-s_{ij}[t]\right) \bigr) \nonumber\\
&+ \sum\displaystyle_{i=1}^{M} \nu_{ii}[t] \E \bigl[s_{ii}[t] - q_i[t] z_i[t] - y_{ii}[t]\bigr] \nonumber\\
&+ \sum\displaystyle_{i=1}^{M} \sum\displaystyle_{j \neq i} \nu_{ij}[t] \E \bigl[ q_c z_j[t] - y_{ij}[t] - s_{ij}[t]\bigr] \nonumber\\
&+ \sum\displaystyle_{i=1}^{M}\beta_i[t] \E \bigl[z_i[t] - e_i[t] \bigr],
\end{align}
 where, due to its linearity, we have moved the expectation $\E[\cdot]$ out of the subgradients. Now, by compacting the Lagrange multipliers into a vector $\lambda[t]$ and the subgradients to $s[t]$, we can rewrite the dual function at time $t$ as
\begin{align}
g(\lambda[t]) &= \sum\displaystyle_{i=1}^{M} \E z_i^2[t] + \sum\displaystyle_{i=1}^{M} \sum\displaystyle_{j=1}^{M} \E \bar{\nu}_{ij} y_{ij}[t] \nonumber\\
& + \E \big[ s^T[t] | \lambda[t] \big] \lambda[t].
\label{eq:avgSubgradientDualLambdaT}
\end{align}
Further, for any arbitrary $\lambda$, the dual function $g(\lambda)$ can be bounded as
\begin{align}
g(\lambda) &\leq \sum\displaystyle_{i=1}^{M} \E z_i^2[t] + \sum\displaystyle_{i=1}^{M} \sum\displaystyle_{j=1}^{M} \E \bar{\nu}_{ij} y_{ij}[t] \nonumber\\
& + \E \big[ s^T[t] | \lambda[t] \big] \lambda.
\label{eq:avgSubgradientDualLambda}
\end{align}
Then, by substracting expression \eqref{eq:avgSubgradientDualLambda} from \eqref{eq:avgSubgradientDualLambdaT} we obtain
\begin{align}
\E \big[ s^T[t] | \lambda [t] \big] 	\big( \lambda [t]  - \lambda \big) \leq g(\lambda [t] ) - g(\lambda),
\end{align}
which is the desired inequality.
\end{proof}
The previous proposition states that, on average, the stochastic subgradient $s[t]$ is an ascent direction of the dual function $g(\lambda [t] )$. Then, the next step is to quantify the average reduction in distance to the optimal dual variables $\lambda^\star$ that occurs in a dual variable update step.
\begin{lemma}
\label{lmm:AverageDualDist}
Let $\E\bigl[\|s[t] \|^2| \lambda[t]\bigr] \leq S^2$  be a bound on the second moment of the norm of the stochastic subgradients $s[t]$. The dual updates of Algorithm \ref{alg:Algorithm}, satisfy the following inequality
\begin{align}
\label{eq:LmmAverageDualDist}
\E \big[ \|  \lambda^\star  - & \lambda[t+1]\big\|^2 | \lambda[t] \big]   \leq
\bigl( 1- \epsilon m+  2\epsilon^2 L B \bigr)  \big\|  \lambda^\star - \lambda[t] \big\|^2 \nonumber\\
&+\epsilon^2 S^2 + 2\epsilon^2 L B S^2 
- \epsilon \big( g(\lambda^\star) - g(\lambda[t]) \big),
\end{align}
where the constant $L>0$ corresponds to the $L$-Lipschitz continuity of the gradients of the dual function $g(\lambda)$ and $m>0$ to the strong concavity constant of the dual function $g(\lambda)$.
\end{lemma}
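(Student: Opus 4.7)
The plan is to start from the asynchronous dual update $\lambda[t+1]=[\lambda[t]+\epsilon\tilde s[t]]^+$ and exploit non-expansiveness of the projection onto the non-negative orthant. Since the optimal $\lambda^\star$ lies in the feasible orthant, this yields the standard stochastic-subgradient expansion
\[
\|\lambda^\star-\lambda[t+1]\|^2 \leq \|\lambda^\star-\lambda[t]\|^2 - 2\epsilon\,\tilde s[t]^T(\lambda^\star-\lambda[t]) + \epsilon^2\|\tilde s[t]\|^2.
\]
Taking conditional expectation on $\lambda[t]$, the last term is immediately bounded by $\epsilon^2 S^2$ via the second-moment hypothesis, and all remaining work is to control the cross term.

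The key step is to decompose the asynchronous subgradient as $\tilde s[t]=s[t]+(\tilde s[t]-s[t])$, where $s[t]$ is the subgradient that would arise from the fully synchronized multipliers $\lambda[t]$. For the synchronous piece, Proposition~\ref{prop:avgSubgradient} combined with the $m$-strong concavity of $g$ (and the non-negativity $g(\lambda^\star)-g(\lambda[t])\geq 0$, which lets us weaken a $-2\epsilon$ coefficient to $-\epsilon$) produces both the $-\epsilon m\|\lambda^\star-\lambda[t]\|^2$ and the $-\epsilon(g(\lambda^\star)-g(\lambda[t]))$ contributions appearing in \eqref{eq:LmmAverageDualDist}.

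For the asynchronism perturbation $\tilde s[t]-s[t]$, I would apply Cauchy--Schwarz followed by $L$-Lipschitz continuity of the subgradient to obtain $\|\tilde s[t]-s[t]\|\leq L\|\tilde\lambda[t]-\lambda[t]\|$. By Assumption~\ref{assump:asynchronicity}, the multipliers used asynchronously differ from $\lambda[t]$ by at most $B$ update steps, so $\|\tilde\lambda[t]-\lambda[t]\|\leq\epsilon\sum_{\tau=t-B}^{t-1}\|\tilde s[\tau]\|$ and, after squaring with Cauchy--Schwarz on the sum, $\|\tilde\lambda[t]-\lambda[t]\|^2\leq\epsilon^2 B\sum_{\tau=t-B}^{t-1}\|\tilde s[\tau]\|^2$. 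A weighted Young inequality $2ab\leq\alpha a^2+\alpha^{-1}b^2$ applied to $\|\tilde s[t]-s[t]\|\cdot\|\lambda^\star-\lambda[t]\|$ with weight $\alpha$ proportional to $\epsilon B$ then separates one piece that contributes $2\epsilon^2 LB\|\lambda^\star-\lambda[t]\|^2$ and another that, upon taking conditional expectation and invoking the $S^2$ bound on each of the $B$ historical subgradient norms, contributes $2\epsilon^2 LBS^2$. Summing these bounds reproduces \eqref{eq:LmmAverageDualDist}.

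The main obstacle will be the asynchronism bookkeeping: controlling $\|\tilde\lambda[t]-\lambda[t]\|^2$ uniformly in $t$ by a sum of at most $B$ squared subgradient norms, and then choosing the Young weight so that both the multiplier of $\|\lambda^\star-\lambda[t]\|^2$ and the constant term come out to exactly $2\epsilon^2 LB$. A secondary subtlety is carefully combining the subgradient inequality of Proposition~\ref{prop:avgSubgradient} with the strong-concavity bound so that the $g$-gap and the squared distance appear with precisely the coefficients $-\epsilon$ and $-\epsilon m$ in the final inequality.
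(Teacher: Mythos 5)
Your proposal is correct and follows essentially the same route as the paper: non-expansiveness of the projection, expansion of the squared distance, splitting $\tilde s[t]$ into the synchronous subgradient plus an asynchronism perturbation, Cauchy--Schwarz with $L$-Lipschitz continuity and the $B$-step bound of Assumption~\ref{assump:asynchronicity} for the perturbation, and strong concavity together with Proposition~\ref{prop:avgSubgradient} for the main cross term. The only cosmetic difference is that the paper bounds the products $\|\tilde s[l]\|\,\|\lambda^\star-\lambda[t]\|$ termwise via $\|u\|\|v\|\leq\|u\|^2+\|v\|^2$ rather than through your weighted Young inequality, and it obtains the $-\epsilon m\|\lambda^\star-\lambda[t]\|^2$ and $-\epsilon(g(\lambda^\star)-g(\lambda[t]))$ terms by splitting the cross term into two $-\epsilon$ halves rather than by invoking non-negativity of the optimality gap.
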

\begin{proof}
Let us consider the squared distance between the dual iterates $\lambda$ at time $t+1$ and their optimal value, i.e., $\big\|\lambda^\star-\lambda[t+1]\big\|^2$. By means of the dual update $\lambda[t+1]=\left[\lambda[t]+\epsilon \tilde{s}[t] \right]^+$, we can rewrite this expression as
\begin{align}
\big\|\lambda^\star-\lambda[t+1]\big\|^2& =\big\|\lambda^\star-\big[\lambda[t]+\epsilon\tilde{s}[t]\big]^+\big\|^2 \nonumber \\
&\leq \big\|\lambda^\star-\lambda[t]-\epsilon\tilde{s}[t]\big\|^2
\end{align}
where we have further upper bounded the expression by the nonexpansive property of the nonnegative projection. Then, we expand the square norm, yielding the expressions
\begin{align}
\big\|\lambda^\star-\lambda[t+1]\big\|^2 \leq  \big\|\lambda^\star-\lambda[t] & \big\|^2+\epsilon^2 \big\|\tilde{s}[t]\big\|^2 \nonumber\\
&- 2\epsilon \tilde{s}^T[t]	\big( \lambda^\star  - \lambda[t] \big).
\label{eq:LemmaDualGapExpansion}
\end{align}
We can further rewrite this inequality by adding and subtracting the term $2\epsilon s^T[t]	\big( \lambda^\star  - \lambda[t] \big)$ to expression \eqref{eq:LemmaDualGapExpansion}, leading to the following
\begin{align}
\big\|  & \lambda^\star -\lambda[t+1]\big\|^2  \leq
\big\|\lambda^\star-\lambda[t]\big\|^2
+\epsilon^2 \big\|\tilde{s}[t]\big\|^2 \nonumber\\
&+ 2\epsilon \big( s[t] - \tilde{s}[t]	\big)^T \big( \lambda^\star  - \lambda[t] \big) 
- 2\epsilon s^T[t]	\big( \lambda^\star  - \lambda[t] \big).
\end{align}
By applying the Cauchy-Schwarz inequality to the third term on the right hand side, we further rewrite the expression as
\begin{align}
\big\|  & \lambda^\star -\lambda[t+1]\big\|^2  \leq
\big\|\lambda^\star-\lambda[t]\big\|^2 
+\epsilon^2 \big\|\tilde{s}[t]\big\|^2 \nonumber\\
&+ 2\epsilon \big\| s[t] - \tilde{s}[t]	\big\| \big\| \lambda^\star  - \lambda[t] \big\|
- 2\epsilon s^T[t]	\big( \lambda^\star  - \lambda[t] \big).
\end{align}
Then, we can further bound the expression by relying on the $L$-Lipschitz continuity of the subgradients of the dual function
\begin{align}
\big\|  & \lambda^\star -\lambda[t+1]\big\|^2  \leq
\big\|\lambda^\star-\lambda[t]\big\|^2
+\epsilon^2 \big\|\tilde{s}[t]\big\|^2 \nonumber\\
&+ 2\epsilon L\big\| \lambda[t] - \tilde{\lambda}[t]	\big\| \big\| \lambda^\star  - \lambda[t] \big\|
- 2\epsilon s^T[t]	\big( \lambda^\star  - \lambda[t] \big) 
\end{align}
Now, we bound the difference $\big\| \lambda[t] - \tilde{\lambda}[t]	\big\| $ between the dual variables $\lambda[t]$ and their asynchronous counterpart $\tilde{\lambda}[t]$. Recall that Assumption \ref{assump:asynchronicity} states that there exists an asynchronicity limit of $B$ time slots between the global and asynchronous variables. This means that the asynchronous dual variables $\tilde{\lambda}[t]$ are, at most, $B$ subgradients steps out of synchronysm. We can then bound the difference by $\big\| \lambda[t] - \tilde{\lambda}[t]	\big\|  \leq \epsilon \big\| \sum_{l=t-B-1}^{t-1}\tilde{s}[l]\big\| \leq \epsilon \sum_{l=t-B-1}^{t-1}\big\| \tilde{s}[l]\big\|$, where we have also applied the triangle inequality. Then, we write
\begin{align}
\big\|  \lambda^\star -\lambda[t+1]\big\|^2  &\leq
\big\|\lambda^\star-\lambda[t]\big\|^2
+\epsilon^2 \big\|\tilde{s}[t]\big\|^2  \nonumber\\
&+ 2\epsilon^2 L\left( \sum\limits_{l=t-B-1}^{t-1}\big\| \tilde{s}[l]\big\|  \big\|  \lambda^\star - \lambda[t] \big\| \right)  \nonumber\\
&- 2\epsilon s^T[t]	\big( \lambda^\star  - \lambda[t] \big).
\end{align}
The third term on the right hand side can be further expanded by making use of the inequality $\|u\| \|v\| \leq \|u\|^2 + \|v\|^2$, leading to the following expression
\begin{align}
\big\|  \lambda^\star -\lambda[t+1]\big\|^2  &\leq
\big\|\lambda^\star-\lambda[t]\big\|^2
+\epsilon^2 \big\|\tilde{s}[t]\big\|^2 \nonumber\\
&+ 2\epsilon^2 L \sum\limits_{l=t-B-1}^{t-1}\biggl( \big\| \tilde{s}[l]\big\|^2 +  \big\|  \lambda^\star - \lambda[t] \big\|^2 \biggr) \nonumber\\
&- 2\epsilon s^T[t]	\big( \lambda^\star  - \lambda[t] \big).
\end{align}
Rearranging the terms 
\begin{align}
\big\|  \lambda^\star -\lambda[t+1]\big\|^2  &\leq
\big\|\lambda^\star-\lambda[t]\big\|^2
+  2\epsilon^2 L B \big\|  \lambda^\star - \lambda[t] \big\|^2 \nonumber\\
&+\epsilon^2 \big\|\tilde{s}[t]\big\|^2 
+ 2\epsilon^2 L \sum\limits_{l=t-B-1}^{t-1}\big\| \tilde{s}[l]\big\|^2 \nonumber\\
&- 2\epsilon s^T[t]	\big( \lambda^\star  - \lambda[t] \big) 
\end{align}
Then, separate the last term on the right hand side, and take $-\epsilon s^T[t]	\big( \lambda^\star  - \lambda[t] \big)$ and note that we can rewrite it as $-\epsilon \big(s^{\star} -s[t] \big)	^T\big( \lambda^\star  - \lambda[t] \big)$. Then, by strong concavity we can bound this term by $-\epsilon m \big\|  \lambda^\star - \lambda[t] \big\|^2$. Now, we take the expectation conditioned on $\lambda[t]$ on both sides of the previous inequality
\begin{align}
\E \big[ \|  \lambda^\star & -\lambda[t+1]\big\|^2 | \lambda[t] \big]   \leq
\bigl( 1- \epsilon m +  2\epsilon^2 L B \bigr)  \big\|  \lambda^\star - \lambda[t] \big\|^2  \nonumber\\
&+\epsilon^2 \E\bigl[\|\tilde{s}[t] \|^2| \lambda[t]\bigr]
+ 2\epsilon^2 L \sum\limits_{l=t-B-1}^{t-1} \E\bigl[\|\tilde{s}[l] \|^2| \lambda[t]\bigr] \nonumber\\
&-\epsilon \E \big[ s^T[t]	 | \lambda[t] \big]\big( \lambda^\star  - \lambda[t] \big) 
\end{align}
And then, by applying the subgradient bound given by $\E\bigl[\|s[t] \|^2| \lambda[t]\bigr] \leq S^2$, and  particularizing Proposition \ref{prop:avgSubgradient} with $\lambda=\lambda^\star$, we have
\begin{align}
\E \big[ \|  \lambda^\star  - & \lambda[t+1]\big\|^2 | \lambda[t] \big]   \leq
\bigl( 1- \epsilon m +  2\epsilon^2 L B \bigr)  \big\|  \lambda^\star - \lambda[t] \big\|^2 \nonumber\\
&+\epsilon^2 S^2 + 2\epsilon^2 L B S^2 
- \epsilon \big( g(\lambda^\star) - g(\lambda[t]) \big),
\end{align}
which gives us the desired inequality.
\end{proof}
The previous lemma holds on average, while we are interested in establishing convergence almost surely. We leverage on this lemma and resort to a supermartingale argument to show that Algorithm 1 converges to a neighborhood of the optimal solution of the dual function.
\begin{lemma}
\label{lmm:DualGap}
Let $\E\bigl[\|s[t] \|^2| \lambda[t]\bigr] \leq S^2$  be a bound on the second moment of the norm of the stochastic subgradients $s[t]$. Further, consider the dual updates of Algorithm \ref{alg:Algorithm}, with step size $\epsilon \leq m/(2LB)$. Then, assume that the dual variable $\lambda[T]$ is given for an arbitrary time $T$ and define as $\lambda_{\mathrm{best}}[t] := \argmin_{\lambda[l]} g(\lambda[l])$ the dual variable leading to the best value of the of the dual function for the interval $l \in [T,t]$. Then, we have
\begin{align}
\label{eq:LmmDualGapEQ}
\lim\limits_{t \to \infty} g(\lambda_{\mathrm{best}}[t]|\lambda[T]) \geq g(\lambda^{\star}) - \epsilon^2 S^2 - 2\epsilon^2 L B S^2 ~ \text{a.s.}
\end{align}
\end{lemma}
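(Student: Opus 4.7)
The plan is to leverage Lemma \ref{lmm:AverageDualDist} and convert its conditional-expectation recursion into an almost sure statement via a supermartingale/pigeonhole argument. First, I would observe that the step-size restriction $\epsilon \leq m/(2LB)$ ensures $1 - \epsilon m + 2\epsilon^2 L B \leq 1$, so that with the shorthand $V[t] := \|\lambda^\star - \lambda[t]\|^2$ and $C := \epsilon^2 S^2 + 2\epsilon^2 L B S^2$, the inequality of Lemma \ref{lmm:AverageDualDist} simplifies to
\begin{equation*}
\E\bigl[V[t+1] \mid \lambda[t]\bigr] \;\leq\; V[t] + C - \epsilon\bigl(g(\lambda^\star) - g(\lambda[t])\bigr).
\end{equation*}
Since $g(\lambda^\star) \geq g(\lambda[t])$ for all $t$, the last term is non-positive, so the sequence $V[t]$ behaves like a non-negative process whose conditional drift is governed by the instantaneous dual gap.

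The core of the proof is then a contradiction argument exploiting the monotonicity of $g(\lambda_{\mathrm{best}}[t])$. Suppose that on an event $\mathcal{E}$ of positive probability one had $\lim_{t \to \infty} g(\lambda_{\mathrm{best}}[t]) < g(\lambda^\star) - \epsilon^2 S^2 - 2\epsilon^2 L B S^2 - \eta$ for some $\eta > 0$. Because $g(\lambda_{\mathrm{best}}[t])$ is by construction non-decreasing and bounds the running minimum of the gap from above, this forces $g(\lambda^\star) - g(\lambda[l]) > (C + \eta)/\epsilon$ for every $l \geq T$ on $\mathcal{E}$. Substituting back into the displayed recursion yields $\E[V[t+1] \mid \lambda[t]] \leq V[t] - \eta$ for all $t \geq T$, and iterating/telescoping this bound would drive $\E V[t] \to -\infty$, contradicting $V[t] \geq 0$.

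To promote this contradiction from expectation to the almost sure level, I would invoke a supermartingale convergence result such as Robbins--Siegmund, applied after localizing to the bad event via a stopping time. Concretely, for each fixed $\eta > 0$ I would define $\tau_\eta$ as the first time the dual gap drops below $C/\epsilon + \eta/\epsilon$; on $\{\tau_\eta = \infty\} \cap \mathcal{E}$ the stopped process $V[t \wedge \tau_\eta]$ is a non-negative supermartingale with uniformly negative drift bounded away from zero by $\eta$, which is impossible on a set of positive probability, so $\Pr(\tau_\eta = \infty) = 0$. Letting $\eta \downarrow 0$ along a countable sequence and using monotonicity of $g(\lambda_{\mathrm{best}}[t])$ closes the argument.

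The main obstacle is precisely that the residual error $C$ in the one-step recursion is a constant rather than a summable sequence, which prevents a vanilla Robbins--Siegmund application to $V[t]$ itself and would not yield almost sure convergence of $g(\lambda[t])$ to $g(\lambda^\star)$. The workaround, which the proof must handle carefully, is to argue about the best iterate $\lambda_{\mathrm{best}}[t]$ and to use its monotonicity so that the hypothetical failure event produces a \emph{persistent} negative drift, thereby sharpening the non-summable error term into a finite neighborhood bound rather than asymptotic exactness.
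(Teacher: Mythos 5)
Your proposal is correct and follows essentially the same route as the paper: the paper implements your ``localize to the bad event'' step by multiplying $\|\lambda^\star-\lambda[t]\|^2$ and the drift term by the indicator $\mathbb{I}\{g(\lambda^\star)-g(\lambda_{\mathrm{best}}[t])>\epsilon S^2+2\epsilon LBS^2\}$ and then invoking the supermartingale convergence theorem to force the (nonnegative) drift sequence to be summable, which is exactly your stopped-process argument with uniformly negative drift on the failure event. The only caveat, which your write-up shares with the paper's own proof, is the bookkeeping of one factor of $\epsilon$: the drift argument actually certifies the threshold $\epsilon S^2+2\epsilon LBS^2$ rather than the $\epsilon^2$ version appearing in the lemma statement.
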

\begin{proof}
Let $T=0$ for simplicity of exposition. Then, define the sequence $\alpha[t]$ corresponding to a stopped process tracking the dual distance $\big\|\lambda^\star-\lambda[t] \big\|^2$ until the optimality gap $g(\lambda^\star) - g(\lambda[t])$ falls bellow $\epsilon S^2 + 2\epsilon L B S^2$. Namely, 
\begin{align}
\alpha [t] :&=
\big\|\lambda^\star-\lambda[t] \big\|^2 \nonumber\\
 &\mathbb{I}\left\{  g(\lambda^{\star}) - g(\lambda_{\mathrm{best}}[t])> \epsilon S^2 + 2\epsilon L B S^2\right\},
 \label{eq:LmmDualGapDefAlpha}
\end{align}
where $\mathbb{I} \{ \cdot \}$ is the indicator function. In a similar manner define the sequence $\beta [t]$ as follows
\begin{align}
\beta [t] :&=
\biggl(
\epsilon \big( g(\lambda^\star) - g(\lambda[t]) \big)
-\epsilon^2 S^2 - 2\epsilon^2 L B S^2 
\biggr) \nonumber\\
 &\mathbb{I}\left\{  g(\lambda^{\star}) - g(\lambda_{\mathrm{best}}[t])> \epsilon S^2 + 2\epsilon L B S^2\right\}.
 \label{eq:LmmDualGapDefBeta}
\end{align}
Now, let $\mathcal{F}[t]$ be the filtration measuring $\alpha[t]$, $\beta[t]$ and  $\lambda[t]$. Since $\alpha[t]$ and $\beta[t]$ are completely determined by $\lambda[t]$, and $\lambda[t]$ is a Markov process, conditioning on $\mathcal{F}[t]$ is equivalent to conditioning on $\lambda[t]$. Hence, we can write the expectation $\E\left[ \alpha[t] | \mathcal{F}[t] \right] = \E\left[ \alpha[t] | \lambda[t] \right]$. Now, consider this expectation at time $t+1$, given by
\begin{align}
\E & \bigl[  \alpha [t+1]| \lambda[t] \bigr] = \E\bigl[
\big\|\lambda^\star-\lambda[t+1] \big\|^2 \nonumber\\
 &\mathbb{I}\left\{  g(\lambda^{\star}) - g(\lambda_{\mathrm{best}}[t+1])> \epsilon S^2 + 2\epsilon L B S^2\right\}
| \lambda[t] \bigr].
\end{align}
By noting that the indicator term is lower or equal than 1, we can upper bound this expression as
\begin{align}
\E\bigl[ & \alpha [t+1]| \lambda[t] \bigr] \leq \E\bigl[
\big\|\lambda^\star-\lambda[t+1] \big\|^2 | \lambda[t] \bigr].
\end{align}
Then, by application of Lemma \ref{lmm:AverageDualDist} we have
\begin{align}
\E\bigl[ & \alpha [t+1]| \lambda[t] \bigr] \leq
\bigl( 1- \epsilon m +  2\epsilon^2 L B \bigr)  \big\|  \lambda^\star - \lambda[t] \big\|^2 \nonumber\\
&+\epsilon^2 S^2 + 2\epsilon^2 L B S^2 
- \epsilon \big( g(\lambda^\star) - g(\lambda[t]) \big).
\end{align}
By making use of the definitions of $\alpha[t]$ and $\beta[t]$, given by equations \eqref{eq:LmmDualGapDefAlpha} and \eqref{eq:LmmDualGapDefBeta}, we can rewrite the previous expression as
\begin{align}
\E\bigl[ \alpha [t+1]| \lambda[t] \bigr] \leq \bigl( 1- \epsilon m +  2\epsilon^2 L B \bigr) \alpha[t] - \beta[t].
\end{align}
Now, since the step size chosen is $\epsilon \leq m/(2LB)$, this means that the factor multiplying the process $\alpha[t]$ is upper bounded by $ \bigl( 1- \epsilon m +  2\epsilon^2 L B \bigr) \leq 1$. Therefore, we can further write the expectation as 
\begin{align}
\E\bigl[ \alpha [t+1]| \lambda[t] \bigr] \leq \alpha[t] - \beta[t].
\end{align}
Since by definition the processes $\alpha [t]$ and $\beta [t]$ are nonnegative, the supermartingale convergence theorem \cite[Theorem 5.2.9]{durrett2010probability} states that the sequence $\alpha [t]$ converges almost surely, and the sum $\sum_{t=1}^{\infty}\beta[t] < \infty$ is almost surely finite. This carries the implication that $\lim\inf_{t \to \infty} \beta[t] = 0$. Given the definition of the sequence $\beta[t]$, this implies that $\lim_{t \to \infty} g(\lambda_{\mathrm{best}}[t]) \geq g(\lambda^{\star}) - \epsilon^2 S^2 - 2\epsilon^2 L B S^2$ almost surely.
\end{proof}

Now, it suffices to show that the iterates generated by the algorithm are almost surely feasible to the original problem \eqref{eq:optProbNoAux} if the constants $\bar{\nu}_{ij}$ are chosen to be an upper bound of the optimal $\nu^\star_{ij}$ multipliers.

\begin{proposition}[Auxiliary Feasibility]
\label{prop:AuxFeasibility}
Assume there exist feasible primal variables $z_i, s_{ij}$ and $y_{ij}$, such that for some $\xi > 0$, we have $\log(p_i) - \log(s_{ii}) - \sum_{j\neq i}\log\left(1-s_{ij}\right) < -\xi$, $s_{ii} - \E q_i z_i - y_{ii} <  -\xi$, $\E q_c z_j - y_{ij} - s_{ij} < -\xi$ and $\E z_i - \E e_i < -\xi$. Then, the sequences generated by Algorithm \ref{alg:Algorithm}, satisfy the constraints \eqref{eq:optProbAuxC0}$-$\eqref{eq:optProbAuxC3} almost surely.
\end{proposition}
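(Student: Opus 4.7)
The plan is to combine the dual convergence of Lemma~\ref{lmm:DualGap} with the Slater hypothesis to argue that the long-run averages of the stochastic subgradients, which are precisely the random constraint slacks, are nonpositive almost surely; then, by ergodicity of the channel and energy-harvesting processes, the expected slacks inherit that inequality, which is exactly what constraints \eqref{eq:optProbAuxC0}--\eqref{eq:optProbAuxC3} assert.

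The starting point is the standard telescoping identity for projected subgradient ascent. For every dual variable $\lambda_k$ with associated stochastic subgradient $\tilde{s}_k[t]$, the update $\lambda_k[t+1]=[\lambda_k[t]+\epsilon\tilde{s}_k[t]]^+$ dominates the unprojected recursion, so summing gives
\begin{align*}
\frac{1}{T}\sum_{t=0}^{T-1}\tilde{s}_k[t]\leq \frac{\lambda_k[T]-\lambda_k[0]}{\epsilon T}.
\end{align*}
Hence the task reduces to showing $\lambda_k[T]/T\to 0$ almost surely for every $k$: ergodicity then promotes the long-run inequality into $\E[s_k]\leq 0$ and, unraveling the definitions of the subgradients in \eqref{eq:dualUptNuii}--\eqref{eq:dualUptNuij} and the $\phi_i,\beta_i$ updates, into each of \eqref{eq:optProbAuxC0}--\eqref{eq:optProbAuxC3}.

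To bound the growth of $\lambda[T]$, I would use the Slater point $(\tilde{z},\tilde{s},\tilde{y})$ of the hypothesis to dominate the dual function by a coercive linear form. Evaluating the Lagrangian \eqref{eq:optProbAuxLagrangian} at $(\tilde{z},\tilde{s},\tilde{y})$ and using $\lambda\geq 0$ together with the $\xi$-margin on every constraint gives
\begin{align*}
g(\lambda)\leq \mathcal{L}(\tilde{z},\tilde{s},\tilde{y},\lambda)\leq F(\tilde{z})-\xi\|\lambda\|_1,
\end{align*}
where $F$ denotes the objective of \eqref{eq:optProbAux}. Consequently every iterate with $g(\lambda)\geq g(\lambda^{\star})-\eta$ satisfies $\|\lambda\|_1\leq\xi^{-1}(F(\tilde{z})-g(\lambda^{\star})+\eta)$. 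Applied to Lemma~\ref{lmm:DualGap}, this shows that, almost surely, $\lambda[t]$ revisits a uniformly bounded set infinitely often. Combined with the bounded-subgradient assumption $\|\tilde{s}[t]\|\leq S$, a supermartingale argument on the squared distance of $\lambda[t]$ to that set yields $\|\lambda[T]\|/T\to 0$ almost surely, closing the reduction.

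The main obstacle will be the asynchrony: the true subgradient $s[t]$ that Proposition~\ref{prop:avgSubgradient} certifies as a mean ascent direction differs from the delayed $\tilde{s}[t]$ actually used in the update, while the Slater argument is naturally written in terms of $g$ and $s$. I would follow the strategy of Lemma~\ref{lmm:AverageDualDist}: Assumption~\ref{assump:asynchronicity} caps the delay at $B$ slots, and $L$-Lipschitz continuity of the dual subgradients bounds $\|s[t]-\tilde{s}[t]\|$ by a term of order $\epsilon L B S$, contributing only an $O(\epsilon)$ perturbation that is harmless once time-averaged. Feeding the resulting $\|\lambda[T]\|/T\to 0$ back through the telescoped inequality then produces the almost-sure nonpositivity of the ergodic constraint slacks, which is the statement of feasibility \eqref{eq:optProbAuxC0}--\eqref{eq:optProbAuxC3}.
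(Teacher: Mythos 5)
Your proposal is correct and follows essentially the same route as the paper: the Slater margin bounds the dual function by $F(\tilde z)-\xi\|\lambda\|_1$ so that Lemma~\ref{lmm:DualGap} yields (infinitely often) a uniform bound on $\lambda[t]$, while the telescoped dual recursion $\lambda[T]\geq \epsilon\sum_t \tilde s[t]$ forces the averaged constraint slacks to be nonpositive. The paper merely phrases the final step as a contradiction (unbounded linear growth of $\lambda$ under infeasibility) rather than your direct $\|\lambda[T]\|/T\to 0$ reduction, which is an equivalent argument.
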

\begin{proof}
First, we start by upper bounding the value of the dual variables. We collect the feasible primal variables in a vector $\hat{z}=\{z_i, s_{ij},y_{ij}\}$. Then, given feasible primal variables $\hat{z}$ we bound the value of the dual function $g(\lambda)$. Recall that the dual function is defined as $g(\lambda)=\min_{z \in \mathcal{X}} \mathcal{L}(z,\lambda)$, then for any feasible  $\hat{z}$, we necessarily have $g(\lambda) \leq\mathcal{L}(\hat{z},\lambda)$. Hence, we can write
\begin{align}
g &(\lambda)\leq \sum\displaystyle_{i=1}^{M} \E z_i^2 + \sum\displaystyle_{i=1}^{M} \sum\displaystyle_{j=1}^{M} \E \bar{\nu}_{ij} y_{ij} \nonumber\\
&+ \sum\displaystyle_{i=1}^{M} \phi_i\bigl(\log\left(p_i\right) - \log\left(s_{ii}\right) - \sum\displaystyle_{j\neq i}\log\left(1-s_{ij}\right) \bigr) \nonumber\\
&+ \sum\displaystyle_{i=1}^{M} \nu_{ii}\bigl(s_{ii} - \E q_i z_i - y_{ii}\bigr)  \nonumber\\
&+ \sum\displaystyle_{i=1}^{M} \sum\displaystyle_{j \neq i} \nu_{ij}\bigl(\E q_c z_j - y_{ij} - s_{ij}\bigr) \nonumber\\
&+ \sum\displaystyle_{i=1}^{M}\beta_i\bigl(\E z_i - \E e_i\bigr).
\end{align}
Since we have a constant $\xi > 0$ such that $\log(p_i) - \log(s_{ii}) - \sum_{j\neq i}\log\left(1-s_{ij}\right) < -\xi$, $s_{ii} - \E q_i z_i - y_{ii} <  -\xi$, $\E q_c z_j - y_{ij} - s_{ij} < -\xi$ and $\E z_i - \E e_i < -\xi$. We can simplify the bound on the dual function to the following inequality
\begin{align}
g(\lambda) \leq  
\sum_{i=1}^{M} \E z_i^2 + \sum_{i=1}^{M} \sum_{j=1}^{M} \E \bar{\nu}_{ij} y_{ij} - \xi \lambda^T1.
\end{align}
Then, we simply reorder the previous expression to establish an upper bound on the dual variables,
\begin{align}
 \lambda \leq  
\frac{1}{\xi}
\biggl(\sum_{i=1}^{M} \E z_i^2 + \sum_{i=1}^{M} \sum_{j=1}^{M} \E \bar{\nu}_{ij} y_{ij} - g(\lambda) \biggr),
\end{align}
where the inequality is taken component-wise for all elements of the vector $\lambda$ with respect to the scalar on the right hand side of the inequality. Then, by Lemma \ref{lmm:DualGap} we can certify the existence of a time $t \geq T_1$ such that $g(\lambda[t]) \geq g(\lambda^{\star}) - \epsilon^2 S^2 - 2\epsilon^2 L B S^2$. Hence, we can write
\begin{align}
 \lambda[t] \leq  
\frac{1}{\xi}
\biggl(\sum_{i=1}^{M} \E z_i^2 +& \sum_{i=1}^{M} \sum_{j=1}^{M} \E \bar{\nu}_{ij} y_{ij} \nonumber\\
& -g(\lambda^{\star}) + \epsilon^2 S^2 + 2\epsilon^2 L B S^2 
\biggr)
\label{eq:TheoFeasibilityDualVarBound}
\end{align}
for some $t\geq T_1$. Now, consider the feasibility conditions of the optimization problem with the auxiliary variables \eqref{eq:optProbAuxC0}$-$\eqref{eq:optProbAuxC3}, which are given by the long term behavior of the following inequalities
\begin{align}
&\lim_{t \to \infty}\frac{1}{t} \sum\limits_{l=1}^{t}
\bigl( \log(p_i) - \log(s_{ii}[l]) - \sum\limits_{j\neq i}\log\left(1-s_{ij}[l]\right) \bigr) \leq 0 \label{eq:TheoFeasibilityC1}\\
&\lim_{t \to \infty}\frac{1}{t} \sum\limits_{l=1}^{t}
\bigl( s_{ii}[l] - q_i[l] z_i[l] - y_{ii}[l] \bigr) \leq 0  \label{eq:TheoFeasibilityC2}\\
&\lim_{t \to \infty}\frac{1}{t} \sum\limits_{l=1}^{t}
\bigl(  q_c z_j[l] - y_{ij}[l] - s_{ij}[l] \bigr) \leq 0  \label{eq:TheoFeasibilityC3}\\
&\lim_{t \to \infty}\frac{1}{t} \sum\limits_{l=1}^{t} 
\bigl( z_i[l] - e_i[l] \bigr) \leq 0.  \label{eq:TheoFeasibilityC4}
\end{align}
These inequalities simply correspond to the stochastic subgradients of the optimization problem \eqref{eq:optProbAux}. Therefore, we can write the feasibility conditions in a condensed form as $\lim_{t \to \infty}\frac{1}{t} \sum_{l=1}^{t} s[l] \leq 0$. Now, we consider the dual updates of the problem, given by $\lambda[t+1]=\bigl[\lambda[t]+\epsilon \tilde{s}[t] \bigr]^+$. Since the projection is nonnegative, we can upper bound $\lambda[t+1]$ by 
\begin{align} 
\lambda[t+1]\geq \lambda[t] + \epsilon \tilde{s}[t] \geq \lambda[1] +\epsilon \sum_{l=1}^{t} \tilde{s}[l] \geq \epsilon\sum_{l=1}^{t} \tilde{s}[l],
\label{eq:TheoFeasibilityDualUpdateBound}
\end{align}
where we have removed the projection and further upper bounded the expression by recursively substituting the dual updates. Now, we proceed to prove feasibility of the constraints of the auxiliary problem \eqref{eq:optProbAux}. In order to to this, we follow by contradiction. Assume that the conditions \eqref{eq:TheoFeasibilityC1}$-$\eqref{eq:TheoFeasibilityC4} are infeasible. This means that there exists some time $T_2$ where there is a constant $\delta > 0$ such that for $t\geq T_2$ we have $\lim_{t \to \infty}\frac{1}{t} \sum_{l=1}^{t} s[l] \geq \delta$. Substituting this expressions in the dual update bound \eqref{eq:TheoFeasibilityDualUpdateBound} we have that $\lambda[t+1]\geq \epsilon \delta t$. Then, we can freely choose a time $t \geq T_2$ such that
\begin{align}
 \lambda[t] >  
\frac{1}{\xi}
\biggl(\sum_{i=1}^{M} \E z_i^2 +& \sum_{i=1}^{M} \sum_{j=1}^{M} \E \bar{\nu}_{ij} y_{ij} \nonumber\\
& -g(\lambda^{\star}) + \epsilon^2 S^2 + 2\epsilon^2 L B S^2 
\biggr)  
\end{align}
However, the upper bound established in \eqref{eq:TheoFeasibilityDualVarBound} contradicts this expression. Therefore, the inequalities \eqref{eq:TheoFeasibilityC1}$-$\eqref{eq:TheoFeasibilityC4} are satisfied almost surely, which implies that the constraints \eqref{eq:optProbAuxC0}$-$\eqref{eq:optProbAuxC3} of the auxiliary optimization problem \eqref{eq:optProbAux} are almost surely satisfied. 
\end{proof}
This proposition allows us to certify that the constraints of the problem with the auxiliary variables are satisfied. However, this does not ensure stability. Nonetheless, we show that if the parameters $\bar{\nu}_{ij}$ are chosen as to upper bound the optimal dual variables $\nu_{ij}$, then the optimal auxiliary variables are zero, and the proposed algorithm also satisfies the constraints \eqref{eq:optProbNoAuxC0}$-$\eqref{eq:optProbNoAuxC3} of the original problem without the auxiliary variables.
\begin{theorem}[Stability]
\label{theo:stability}
Assume there exist feasible primal variables $z_i$ and $s_{ij}$, such that for some $\xi > 0$, we have $\log(p_i) - \log(s_{ii}) - \sum_{j\neq i}\log\left(1-s_{ij}\right) < -\xi$, $s_{ii} - \E q_i z_i <  -\xi$, $\E q_c z_j - s_{ij} < -\xi$ and $\E z_i - \E e_i < -\xi$. Further, let $\bar{\nu}_{ij}$ be an upper bound to the optimal $\nu_{ij}$ multipliers. Then, the scheduling decisions $z_i[t]$ generated by Algorithm \ref{alg:Algorithm} satisfy the successful packet transmission requirement 
\begin{align}
\lim\limits_{t \to \infty} \frac{1}{t}\sum\limits_{l=1}^{t} q_i[l] z_i[l]\prod\limits_{j\neq i}\left(1-q_c z_j[l]\right) > p_i,
\end{align}
which ensures the stability of all control loops.
\end{theorem}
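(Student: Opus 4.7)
The plan is to derive the packet reception inequality from the almost-sure ergodic feasibility of the auxiliary problem provided by Proposition~\ref{prop:AuxFeasibility}, by first arguing that the auxiliary slack variables $y_{ij}$ vanish in the ergodic sense and then by collapsing the separated constraints \eqref{eq:optProbAuxC0}--\eqref{eq:optProbAuxC2} back into the product form \eqref{eq:ergProbTX}. Once that product-form bound is in hand, Proposition~\ref{prop:ControlAbstraction} immediately delivers the stability of every control loop.

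First, I would invoke Proposition~\ref{prop:AuxFeasibility} to guarantee, almost surely, that the ergodic time averages of the stochastic subgradients associated with \eqref{eq:optProbAuxC0}--\eqref{eq:optProbAuxC3} are nonpositive. Next, I would leverage the hypothesis $\bar{\nu}_{ij} \geq \nu^\star_{ij}$ together with Lemma~\ref{lmm:DualGap} and the strong concavity constant $m>0$ of the dual function to show that the iterates $\nu_{ij}[t]$ concentrate in a neighborhood of $\nu^\star_{ij}$. Except on an asymptotically vanishing set of slots, therefore, $\nu_{ij}[t] < \bar{\nu}_{ij}$, so the thresholding rule \eqref{eq:auxVarYMin} forces $y_{ij}[t]=0$. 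Consequently, the ergodic averages $\lim_{t \to \infty} \frac{1}{t}\sum_{l=1}^{t} y_{ij}[l]$ vanish almost surely, and the ergodic forms of \eqref{eq:optProbAuxC1} and \eqref{eq:optProbAuxC2} collapse to $s_{ii} \leq \E\bigl[q_i z_i\bigr]$ and $s_{ij} \geq \E\bigl[q_c z_j\bigr]$.

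Substituting these into \eqref{eq:optProbAuxC0} and exponentiating (using monotonicity of $\log$) yields $p_i \leq \E\bigl[q_i z_i\bigr]\prod_{j \neq i}\bigl(1 - \E\bigl[q_c z_j\bigr]\bigr)$. The key factorization step is to note that, conditional on the current dual variables, the scheduling decision $z_j[t]$ depends only on its own channel $q_j[t]$ through the closed form \eqref{eq:primalZClosed}; since the channels are independent across sensors, so are the $z_j[t]$'s conditionally on the duals, which yields $\prod_{j\neq i}\bigl(1-\E\bigl[q_c z_j\bigr]\bigr) = \E\bigl[\prod_{j\neq i}(1-q_c z_j)\bigr]$ in the ergodic regime where the dual iterates have stabilized. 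Combining gives $p_i \leq \E\bigl[q_i z_i \prod_{j\neq i}(1-q_c z_j)\bigr]$, which by ergodicity equals the time-averaged quantity on the right-hand side of the theorem. The strict inequality is inherited from the strict feasibility slack $\xi>0$ in the hypothesis, since the residual optimality gap of order $\epsilon^2 S^2 + 2\epsilon^2 L B S^2$ in Lemma~\ref{lmm:DualGap} can be absorbed within the $\xi$ margin. A final appeal to Proposition~\ref{prop:ControlAbstraction} then certifies the average decrease of every Lyapunov function $V_i$.

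The main obstacle will be making the second step rigorous, namely promoting the convergence-in-dual-value given by Lemma~\ref{lmm:DualGap} into an ergodic statement about the dual variables themselves, and from there into the vanishing of $\frac{1}{t}\sum_{l=1}^{t} y_{ij}[l]$. This requires exploiting strong concavity to translate a small gap $g(\lambda^\star)-g(\lambda[t])$ into a bound on $\|\nu_{ij}[t]-\nu^\star_{ij}\|$, and then a careful bookkeeping of the fraction of slots in which the thresholding \eqref{eq:auxVarYMin} activates. The remaining pieces---combining the logarithmic constraints, factorizing the expectation via conditional independence across sensors, and invoking Proposition~\ref{prop:ControlAbstraction}---are essentially routine once this technical step is settled.
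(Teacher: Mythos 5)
Your overall architecture (Proposition~\ref{prop:AuxFeasibility} for almost-sure ergodic feasibility of the auxiliary constraints, then vanishing of the $y_{ij}$ averages, then collapsing the separated constraints back to the product form and invoking Proposition~\ref{prop:ControlAbstraction}) matches the paper's. The difference, and the gap, is in how you kill the auxiliary variables. You propose to show that the iterates $\nu_{ij}[t]$ concentrate near $\nu^{\star}_{ij}$ and hence that the threshold in \eqref{eq:auxVarYMin} fires only on an asymptotically negligible set of slots. Lemma~\ref{lmm:DualGap} does not deliver this: it is a statement about $g(\lambda_{\mathrm{best}}[t])$, the \emph{best} dual value seen so far, and the underlying supermartingale is stopped the moment the gap first falls below the $\epsilon S^2 + 2\epsilon L B S^2$ threshold, so it says nothing about the fraction of time the trajectory spends near $\lambda^{\star}$ afterwards. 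Even granting a strong-concavity conversion of the value gap into $\|\lambda[t]-\lambda^{\star}\| = O(\epsilon)$, the hypothesis is only $\nu^{\star}_{ij} \leq \bar{\nu}_{ij}$ (not strict with an $\epsilon$-sized margin), so an $O(\epsilon)$ neighborhood of $\nu^{\star}_{ij}$ can straddle $\bar{\nu}_{ij}$ and the threshold can keep firing on a positive fraction of slots. Worse, each firing sets $y_{ij}[t] = \bar{y}_{ij} \geq \frac{1}{\epsilon}(\bar{\nu}_{ij}+2\epsilon)$, a quantity of order $1/\epsilon$, so even a small (non-vanishing) triggering frequency does not give a vanishing ergodic average of $y_{ij}[t]$. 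You flag this step as the main obstacle, but it is not a technicality to be cleaned up later; as stated the argument does not close.

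The paper avoids the trajectory-level argument entirely. It works at the level of the two optimization problems: subtracting the Lagrangian of \eqref{eq:optProbNoAux} from that of \eqref{eq:optProbAux}, the difference involves only the $y_{ij}$ terms and the multipliers $\theta_{ij},\mu_{ij}$ of the box constraints $0 \leq y_{ij} \leq \bar{y}_{ij}$; the assumption $\nu^{\star}_{ij}\leq\bar{\nu}_{ij}$ lets one choose $\mu^{\star}_{ij}=0$ and $\theta^{\star}_{ij}=\bar{\nu}_{ij}-\nu^{\star}_{ij}\geq 0$ so that the difference vanishes at the optimum, whence the two problems share solutions and $y^{\star}_{ij}=0$. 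The vanishing of $\frac{1}{t}\sum_{l=1}^{t}y_{ij}[l]$ is then the ergodic convergence of primal averages to optimal primal values for this class of stochastic dual methods (the machinery of \cite{ribeiro2010ergodic}), not a concentration statement about the dual iterates. If you want to salvage your route, you would need either a strict margin $\bar{\nu}_{ij} > \nu^{\star}_{ij} + c\epsilon$ plus a genuine occupation-time bound on $\{t : \nu_{ij}[t] > \bar{\nu}_{ij}\}$, or you should switch to the paper's KKT-equivalence argument. Your remaining steps (exponentiating the log constraint, the conditional-independence factorization across sensors, and the appeal to Proposition~\ref{prop:ControlAbstraction}) are fine.
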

\begin{proof}
To verify this, subtract the Lagrangian of the optimization problem with the auxiliary variables \eqref{eq:optProbAux} and the original problem \eqref{eq:optProbNoAux}. This difference is given by,
\begin{align}
\mathcal{L}(z,\lambda) - \hat{\mathcal{L}}(z,\lambda) &=
\sum_{i=1}^{M} \sum_{j=1}^{M} \bigl( \bar{\nu}_{ij} - \nu_{ij} - \theta_{ij} +\mu_{ij}  \bigr) y_{ij} \nonumber\\
&- \sum_{i=1}^{M} \sum_{j=1}^{M} \mu_{ij} \bar{y}_{ij}
\label{eq:LagrangianDiff}
\end{align}
where $\theta_{ij} \geq 0$ and $\mu_{ij} \geq 0$ are the Lagrange multipliers of the implicit constraints $y_{ij} \geq 0$ and $y_{ij} \leq \bar{y}_{ij}$, respectively. To certify the equivalence between the optimization problems \eqref{eq:optProbNoAux} and \eqref{eq:optProbAux} we need to certify that \eqref{eq:LagrangianDiff} is zero for the optimal values. This implies that there must exist Lagrange multipliers such that $\bar{\nu}_{ij} - \nu_{ij} - \theta_{ij} +\mu_{ij}=0$ and $\mu_{ij}=0$ for all $i,j$. Since $\nu^\star_{ij} \leq \bar{\nu}_{ij}$, we can find multipliers satisfying the constraints by letting $\mu^\star_{ij}=0$ and $\theta^\star_{ij} = \bar{\nu}_{ij} - \nu^\star_{ij} $. Then, $\mathcal{L}(z,\lambda) - \hat{\mathcal{L}}(z,\lambda) = 0$, which implies the solution of both problems is equal. Since $\lim_{t \to \infty}\frac{1}{t} \sum_{l=1}^{t}y_{ij}[l]=y^\star_{ij}$ and $y^\star_{ij}=0$, the primal variables $z_i$ and $s_{ij}$ almost surely satisfy the original constraints of the optimization problem without the auxiliary variable, given by \eqref{eq:optProbNoAuxC0}$-$\eqref{eq:optProbNoAuxC3}. Since constraints \eqref{eq:optProbNoAuxC0}$-$\eqref{eq:optProbNoAuxC2} are equivalent to the constraint $p_i < \E \bigl[ q_i z_i \prod_{j\neq i}\left(1-q_c z_j\right) \bigr]$, by Proposition \ref{prop:ControlAbstraction} we have that Algorithm \ref{alg:Algorithm} generates schedules $z_i[t]$ that ensure the stability of all control loops.
\end{proof}
Theorem \ref{theo:stability} states that if there exist schedules $z_i[t]$ capable of stabilizing the plants, then Algorithm \ref{alg:Algorithm} generates them. Note that the optimal auxiliary variables $y_{ij}$ take the zero value and are therefore not necessary in the long run to satisfy the constraints of the original optimization problem. As previously discussed, they have been introduced in order to provide a way to bound the dual variables and allow us to enforce causality in the energy consumption.

\section{Numerical Results}
\label{sec:NumRes}

In this section, we study the performance of the proposed random access scheme with energy harvesting sensors. We consider a scalar control system, with $M=2$ plants over $T=10{,}000$ time slots. The plant dynamics are given by $A_{o,1}=1.1$ and $A_{c,1}=0.15$ for the first plant, and $A_{o,2}=1.05$ and $A_{c,2}=0.1$ for the second one. Hence, the first system is slightly more unstable than the second one. Further, we consider both systems to be perturbed by i.i.d. zero-mean Gaussian noise. Also, we assume the same performance requirement for both plants, given by the Lyapunov function $V_i(x_i[t])=x_i^2[t]$ and an expected decrease rate of $\rho_i=0.8$. With regards to the communication aspects, we consider a system where the channel states $h_i[t]$ are i.i.d. exponential variables with mean $\E \bigl[ h_i[t] \bigr]=2$, and the decoding probability $q(h_i[t])$ is given by the function shown in Figure \ref{fig:decodingFun}. Since the communication medium is shared, we consider that packet collisions occur with probability $q_c=0.25$. Moreover, we consider the sensing devices to be powered by an energy harvesting process of rate $\E \bigl[ e_i \bigr]=0.5$, and that they store the collected energy in batteries of size $b_i^{\max}=20$. Finally, the parameters of the algorithm are chosen as $\bar{y}_{ij}=25$, $\bar{\nu}_{ij}=19$, and step size $\epsilon=1$. 

\subsection{System Dynamics}

\begin{figure}[t]
	\centering
    \includegraphics[width=\columnwidth]{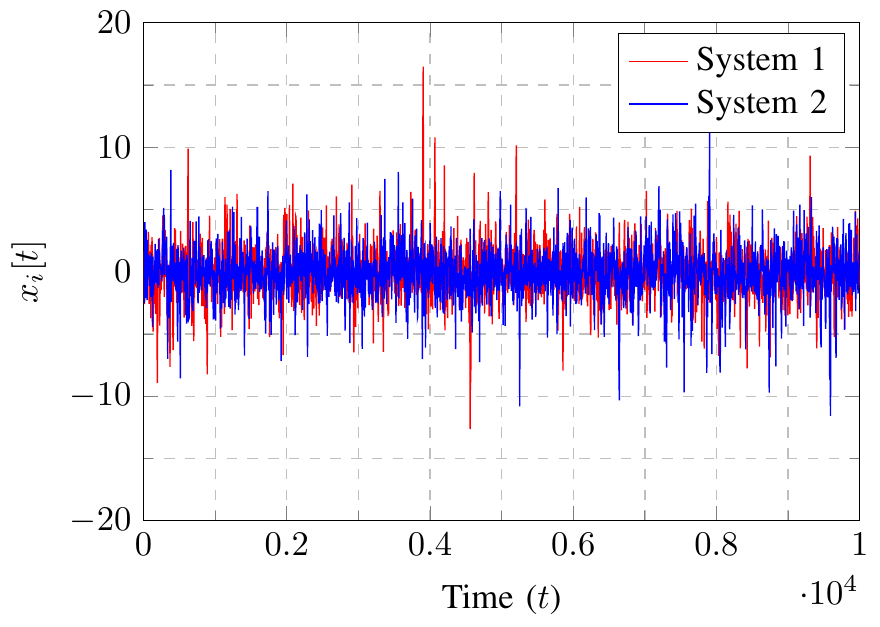} 
	\caption{Evolution of the plant state at each time slot.}
	\label{fig:SystemState}
\end{figure}

We start by studying the evolution of the system dynamics. In Figure \ref{fig:SystemState}, we plot the evolution of the plant state at each time slot. As expected, since our proposed policy stabilizes both plants, the system state oscillates around the zero value. Furthermore, this figure illustrates that System 1 is slightly more unstable than System 2. This is evidenced by the somewhat more pronounced peaks of instability, and higher variance of the plant state $x_1[t]$. In a similar manner, this behavior is also shown in Fig. \ref{fig:BatteryState}. In this figure, we have plotted the evolution of the battery state of the sensing devices of both systems. Since the first plant is slightly more unstable than the second plant, the energy consumption of the sensor in the first system is slightly more pronounced. Also, by taking a look at this figure, one can expect the first system to have larger battery requirements than the second system. Intuitively, since System 1 is more unstable, its sensor node requires a larger battery to counteract its instability. The extent of this requirement will become more apparent once we take a look at the values of the dual variables.

\begin{figure}[t]
	\centering
    \includegraphics[width=\columnwidth]{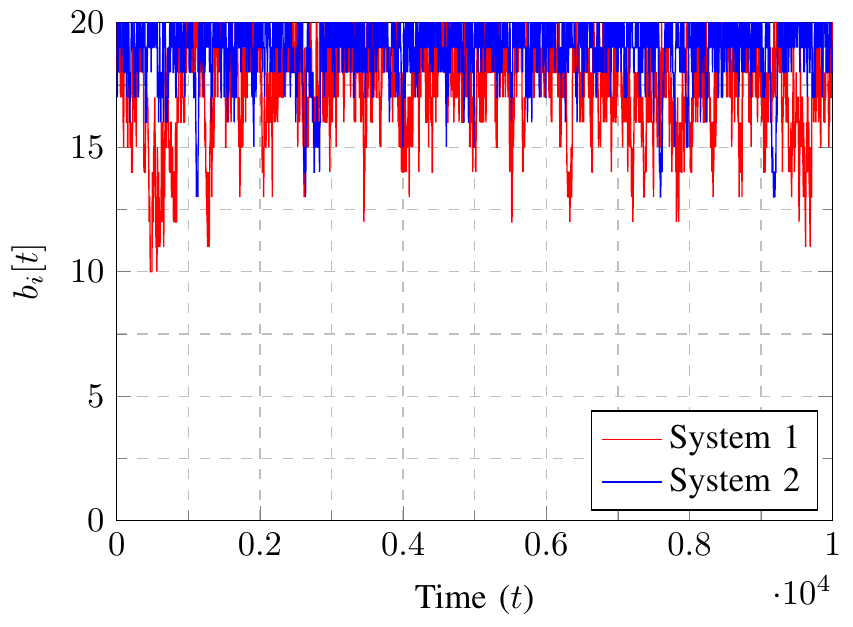} 
	\caption{Energy stored in the batteries at each time slot.}
	\label{fig:BatteryState}
\end{figure}

\subsection{Stability}

\begin{figure}[t]
	\centering
    \includegraphics[width=\columnwidth]{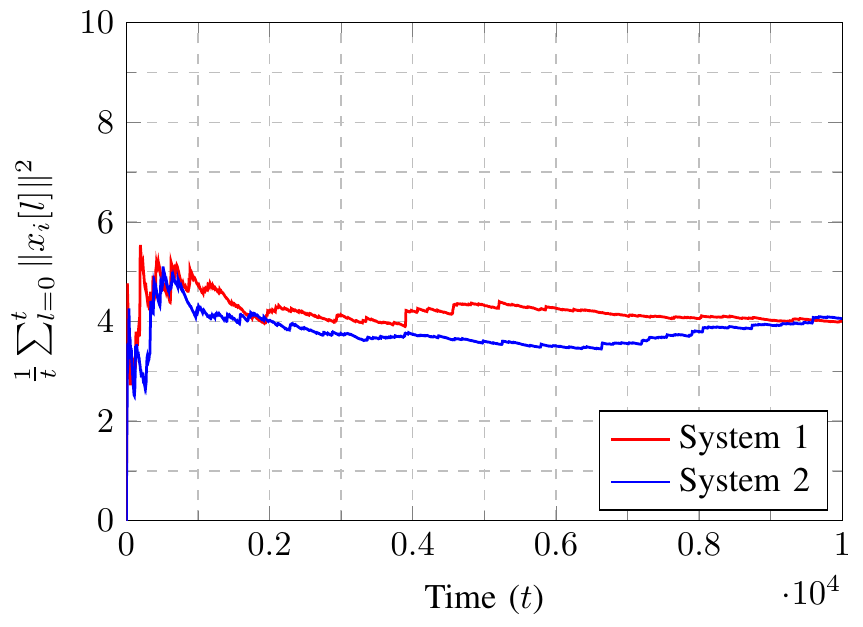} 
	\caption{Evolution of the system control performance.}
	\label{fig:SystemControl}
\end{figure}

In order to study the stability of the plants under our proposed policy, we look at the long term evolution of the system states. First, we look at the evolution of the system control performance. By our design, we require the Lyapunov function $V_i(x_i[t])=x_i^2[t]$ to decrease at an average rate of $\rho_i=0.8$. Iterating expression \eqref{eq:ControlPerfDecrease} in Proposition \ref{prop:ControlAbstraction}, we have that the limit of the control performance is upper bounded in the long run by the term $\tr(P_i W_i) / (1-\rho_i)$. By particularizing this expression to our parameters, we expect the control performance in the limit to be below $\tr(P_i W_i) / (1-\rho_i)=1/(1-0.8)=5$. We plot in Figure \ref{fig:SystemControl} the system control evolution. As expected, both systems are asymptotically stable and the control performance converges to approximately $(1/T)\sum_{t=0}^{T}\|x_i[t]\|^2 \approx 4$ for both plants.

\begin{figure}[t]
	\centering
    \includegraphics[width=\columnwidth]{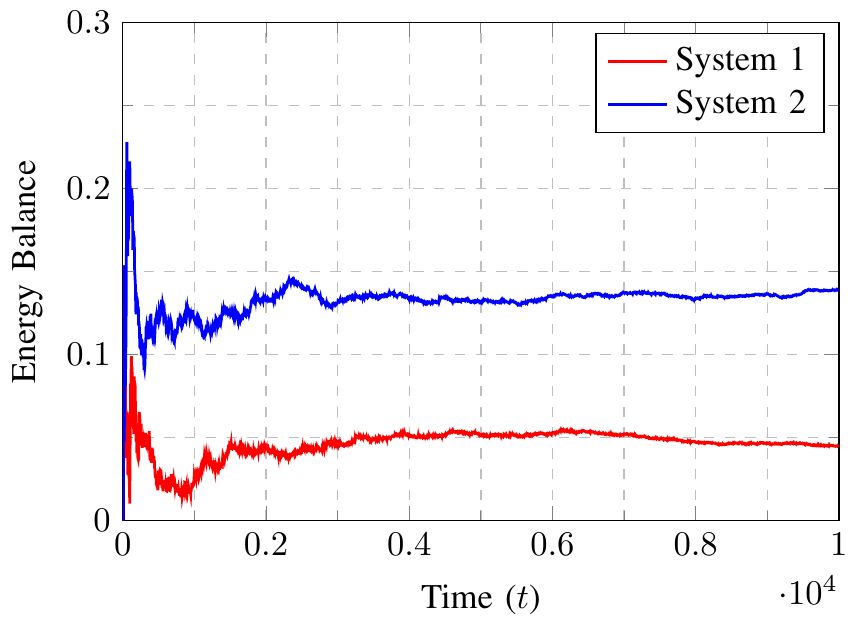} 
	\caption{Average energy balance in the system over time. Given by the expression $(1/t)\sum_{l=0}^{t}\left( e_i[l] -  z_i[l] \right)$.}
	\label{fig:EnergyBalance}
\end{figure}

\begin{figure}[t]
	\centering
    \includegraphics[width=\columnwidth]{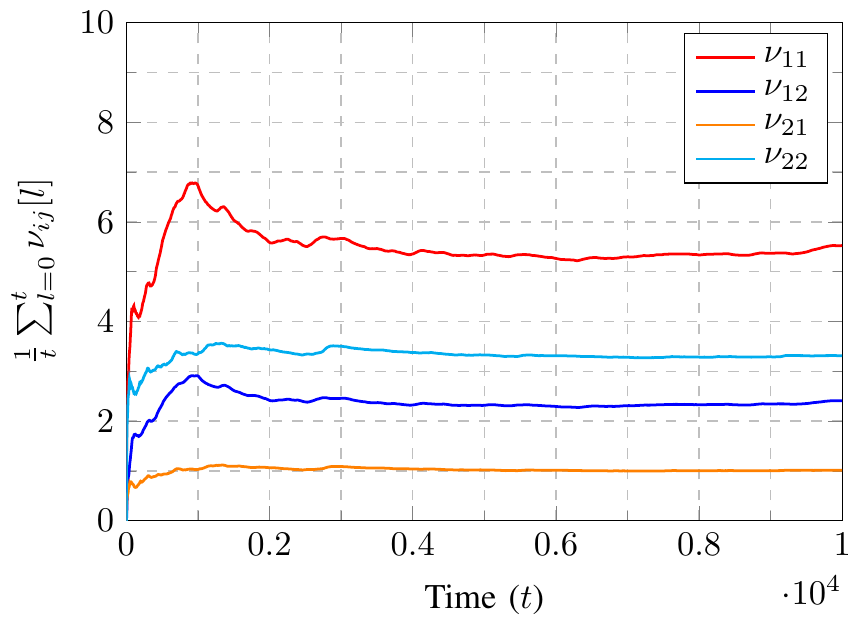} 
	\caption{Average evolution of the dual variables $\nu_{ij}$ over time.}
	\label{fig:DualVariables}
\end{figure}

Another interesting measure to consider is the energy balance of the systems. We denote by energy balance the difference between the harvested energy and the consumed energy. Thus, the average energy balance of the $i$-th system at time $t$ is given by $(1/t)\sum_{l=0}^{t}\left( e_i[l] -  z_i[l] \right)$. We plot this measure in Figure \ref{fig:EnergyBalance}. As expected, since both sensors are powered by energy harvesting processes of the same mean $\E \bigl[ e_i \bigr]=0.5$ and System 1 is more unstable than System 2, the energy balance of the first system is lower. Also, note that the lower bound on the energy balance is zero, since the total energy spent has to necessarily be lower or equal to the total energy harvested. This allows us to interpret the energy balance as a measure of how much more control performance can be obtained with the same energy harvesting process. For example, System 1 has an energy balance of approximately $0.05$ units. Since we have assumed an unitary power consumption, this means that System 1 has energy to support an increase by $0.05$ of its transmit probability. In the same manner, System 2 can support an increase of around $0.14$ of its transmit probability.

Now, we take a look at the dual variables. As we have thoroughly discussed in previous sections, the selection of the $\bar{\nu}_{ij}$ parameters is crucial to the proper operation of the algorithm. Specifically, these parameters should be chosen such that their corresponding optimal dual variables are upper bounded by them. In Figure \ref{fig:DualVariables} we plot the time averaged dual variables $\nu_{ij}$, where the average over time of these variables converges to their optimal value. First, the choice of $\bar{\nu}_{ij}=19$ for all $i,j$ satisfies the required condition, i.e., being an upper bound of the optimal dual variables. Further, by taking a closer look at these dual variables, we can gain some insight into the behavior of the system. First, note that the variables $\nu_{ii}$ are associated to the constraint $s_{ii} \leq \E q_i z_i + y_{ii}$, and hence, represent the requirement of plant $i$ to transmit its state. By looking at expression \eqref{eq:primalZClosed}, corresponding to the closed-form solution of the primal $z_i[t]$, a larger value of the dual variable $\nu_{ii}[t]$ leads to a larger value of the scheduling variable $z_i[t]$. Thus, since System 1 is more unstable than System 2, we have that $\nu_{11} > \nu_{22}$. In a similar way, the variables $\nu_{ij}$ for $j \neq i$ are associated to the constraint $s_{ij} \geq \E q_c z_j - y_{ij}$ and represent, at node $i$, the interference-adjusted need to transmit of the other plants $j \neq i$. Therefore, in our two-plant scenario, a large value of $\nu_{21}[t]$ leads to a lower $z_1[t]$. And in a similar manner as previously, since System 1 is more unstable than System 2, and the systems interfere symmetrically, we have that $\nu_{12} > \nu_{21}$. Also, when previously evaluating Fig. \ref{fig:BatteryState}, we expected System 1 to have higher battery requirements due to its higher instability. Now, as per Proposition \ref{prop:EnergyCausality}, which states that the required battery size $b_i^{\max}$ is proportional to the dual variables $\bar{\nu}_{ii}$ by the inequality $b_i^{\max} \geq \frac{1}{\epsilon} \bar{\nu}_{ii} + 1$, we see that System 1 requires a larger $\bar{\nu}_{ii}$ value, and hence, a larger battery.

\subsection{Communication}

\begin{figure}[t]
	\centering
    \includegraphics[width=\columnwidth]{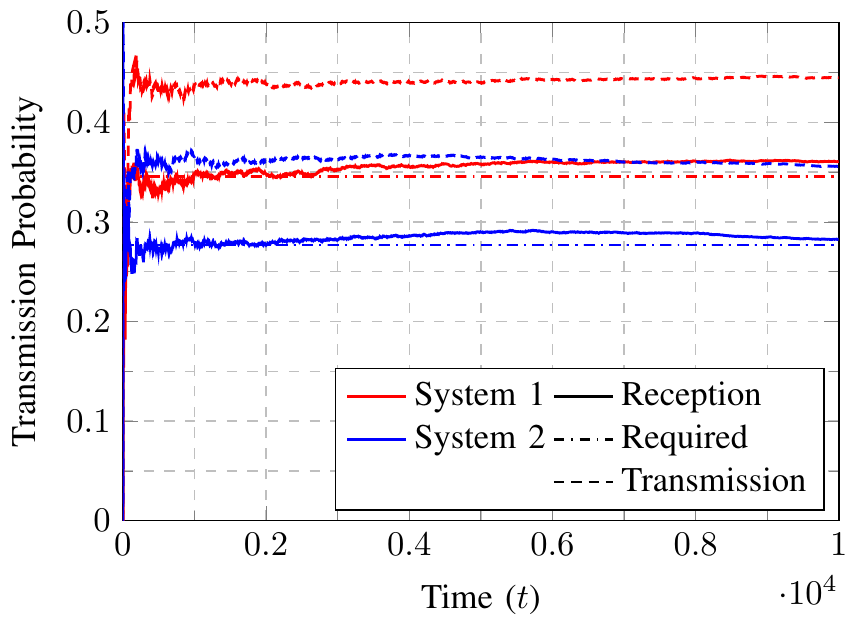} 
	\caption{Average transmission probabilities.}
	\label{fig:ProbTX}
\end{figure}

We turn our attention to the communication aspects of the proposed policy. As we have discussed previously, System 1 is slightly more unstable than System 2. Since we are requiring  for both plants an expected decrease rate of $\rho_i=0.8$ for a Lyapunov function $V_i(x_i[t])=x_i^2[t]$, by Proposition \ref{prop:ControlAbstraction} this translates to required successful transmission probabilities of $p_1 \approx 0.3453$ and $p_2 \approx 0.2769$, respectively. As expected, the less stable system requires a higher successful transmission probability. In Figure \ref{fig:ProbTX} we have plotted the resulting transmission probabilities during our simulation. We look at three different probabilities, (i) the required transmission probabilities, given by $p_i$; (ii) the actual transmission probabilities, $p_i^{TX} \triangleq (1/t)\sum_{l=0}^{t}z_i[l]$; and (iii) the successful reception probabilities, given by $p_i^{RX} \triangleq (1/t)\sum_{l=0}^{t}\bigl(q_i[l] z_i[l] \prod_{j\neq i}\left(1- q_c z_j[l] \right) \bigr)$.

While the required probabilities are $p_1 \approx 0.3453$ and $p_2 \approx 0.2769$, we have that the actual successful reception probabilities are $p_1^{RX} = 0.3607$ and $p_2^{RX} = 0.2827$. These probabilities are over the required ones and, hence, ensure the stability of the systems. However, the probability at which the sensors try to access the medium are higher, $p_1^{TX} = 0.4446$ and $p_2^{TX} = 0.3558$, respectively. This is due to the effects of the transmission medium. Packets can be lost if collisions occur, and they might not be decoded properly if the channel conditions are not sufficiently favorable (cf. Figure \ref{fig:decodingFun}).

The overall effect of the transmission medium is better displayed in Figure \ref{fig:Collision}, where we show the transmission schedules  from $t=1050$ to $t=1100$. In this figure, the bars represent the probability of successful decoding for a given time slot, and the stems represent an access to the medium. Further, collisions are represented by a red dot. From this plot, it is clear that the sensor node tends to access the medium when the channel conditions are favorable (i.e., the decoding function $q_i[t]$ takes values closer to $1$). Also, this figure evidences that collisions happen with a sufficiently low chance, since due to the independence assumption of the channel states between sensors, access does not happen at the same time very often.

\begin{figure}[t]
    \includegraphics[width=\columnwidth]{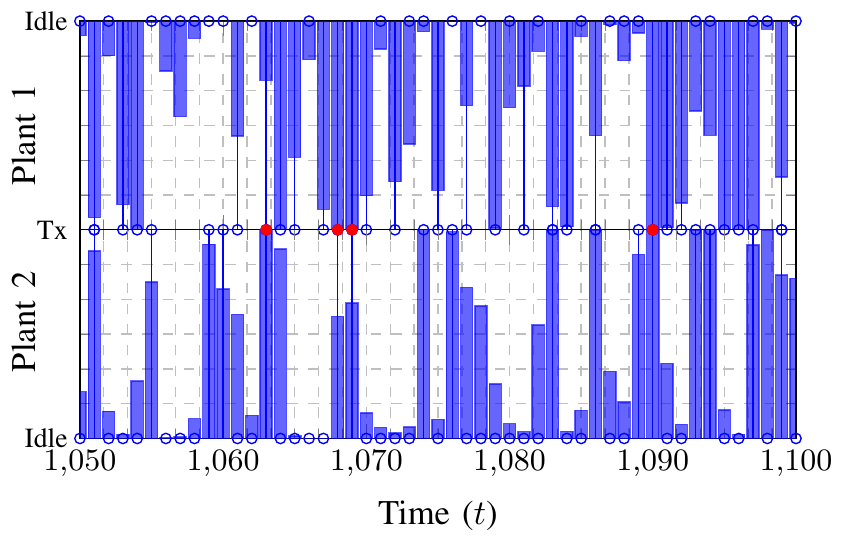} 

	\caption{Transmission schedules from $t=1050$ to $t=1100$. Bars represent the decoding probability $q_i[t]$, stems denote a transmission taking place, and a red dot denotes the occurrence of a collision.}
\label{fig:Collision}
\end{figure}

\section{Conclusions}
\label{sec:Conclusions}

In this work, we have designed a random access communication scheme for energy harvesting sensors in wireless control systems. We have considered a scenario with multiple plants sharing a wireless communication medium. Under these conditions, we have shown that the optimal scheduling decision is to transmit with a certain probability, which is adaptive to the time-varying channel, battery and plant conditions. In order to compute the optimal policy, we have provided an algorithm based on a stochastic dual method. The proposed algorithm is decentralized, where the sensors only need to share some of their dual variables. Furthermore, we have provided theoretical guarantees on the stability of all control loops under the proposed policy, including the consideration of asynchroniticty between the information shared between the nodes. Finally, we have validated by means of simulations the performance of the proposed scheme. The numerical results show that the random access policy is capable of stabilizing all control loops while also satisfying the energy constraints imposed by the energy harvesting process.

\bibliographystyle{IEEEtran}
\bibliography{bib}

\end{document}